\documentclass{article}

\usepackage[utf8]{inputenc} 
\usepackage[T1]{fontenc}    
\usepackage{booktabs}       
\usepackage{amsfonts}       
\usepackage{nicefrac}       
\usepackage{microtype}      

\usepackage{graphicx}
\usepackage{subfigure}
\usepackage{algorithm}
\usepackage{algorithmic}

\usepackage{url}

\usepackage{amsfonts}
\usepackage{amssymb}
\usepackage{amsthm}
\usepackage{amsmath}
\usepackage{url}
\usepackage{multirow}
\usepackage{color}

\usepackage[final]{nips_2018}

\usepackage{enumitem}



\newcommand{\NM}[2]{\| #1 \|_{#2}}

\newcommand{\R}{\mathbb{R}}
\def\a{\alpha}
\def\b{\beta}
\newcommand{\sign}[1]{ \text{\normalfont sign} \left( #1 \right) }

\newcommand{\Diag}[1]{\text{\normalfont Diag}( #1 )}
\newcommand{\nnz}[1]{\text{\normalfont nnz}(  #1 ) }
\newcommand{\ssum}[1]{\text{\normalfont sum}( #1 ) }

\newtheorem{assumption}{Assumption}
\newtheorem{theorem}{Theorem}[section]
\newtheorem{lemma}[theorem]{Lemma}
\newtheorem{proposition}[theorem]{Proposition}
\newtheorem{corollary}[theorem]{Corollary}

\newtheorem{remark}{Remark}[section]
\newtheorem{definition}{Definition}[section]

\newcommand{\Tr}[1]{\text{\normalfont tr}(  #1 ) }
\newcommand{\invH}[1]{\mathcal{H}_{\Omega}^{\!-\!1} (  #1 ) }


\usepackage{array}
\newcolumntype{L}[1]{>{\raggedright\let\newline\\\arraybackslash\hspace{0pt}}m{#1}}
\newcolumntype{C}[1]{>{\centering\let\newline  \\\arraybackslash\hspace{0pt}}m{#1}}
\newcolumntype{R}[1]{>{\raggedleft\let\newline \\\arraybackslash\hspace{0pt}}m{#1}}

\hyphenpenalty = 3000
\tolerance= 10000

\begin{document}

\title{Scalable Robust Matrix Factorization with Nonconvex Loss}

\author{
	Quanming Yao$^{1}$, James T. Kwok$^2$ \\
	$^1$4Paradigm Inc. Beijing, China\\
	$^2$Department of Computer Science and Engineering, \\
	Hong Kong University of Science and Technology, Hong Kong\\
	yaoquanming@4paradigm.com, jamesk@cse.ust.hk
}

\maketitle

\begin{abstract}
Matrix factorization (MF), which uses the $\ell_2$-loss, and
robust matrix factorization (RMF), which uses the $\ell_1$-loss, are
sometimes not 
robust enough for outliers.
Moreover,
even the state-of-the-art RMF solver (RMF-MM) is slow and  cannot
utilize data sparsity. 
In this paper, we propose 
to improve robustness
by using nonconvex loss  functions.
The resultant 
optimization problem
is difficult.
To improve efficiency and scalability,
we propose to use
the majorization-minimization 
(MM) 
and optimize the MM surrogate by using
the accelerated proximal gradient algorithm
on its dual problem.
Data sparsity 
can also be exploited. 
The resultant algorithm has
low time and space complexities, and is guaranteed to converge to a critical point.
Extensive experiments show that it outperforms the state-of-the-art in
terms of both accuracy and speed.
\end{abstract}



\section{Introduction}
\label{sec:intro}

Matrix factorization (MF) is a fundamental 
tool in 
machine learning,
and an important component in many applications such as computer vision
\cite{basri2007photometric,yao2018large},
social networks \cite{yang2013overlapping}
and recommender systems \cite{mnih2008probabilistic}.
The square loss has been commonly used in MF
\cite{candes2009exact,mnih2008probabilistic}.
This implicitly assumes the Gaussian noise,
and is sensitive to outliers.
Eriksson and van den Hengel
\cite{eriksson2010efficient}
proposed 
robust matrix factorization (RMF),
which uses the $\ell_1$-loss instead,
and obtains much better empirical performance.
However,  
the resultant nonconvex nonsmooth optimization problem is much more difficult.

Most RMF solvers are not scalable
\cite{cabral2013unifying,eriksson2010efficient,kim2015efficient,meng2013cyclic,zheng2012practical}.
The current state-of-the-art solver is RMF-MM \cite{lin2017robust}, which is based on majorization minimization (MM) \cite{hunter2004tutorial,lange2000optimization}.
In each iteration, 
a convex nonsmooth surrogate is 
optimized.
RMF-MM is advantageous in that it has theoretical convergence guarantees, and
demonstrates fast empirical convergence 
\cite{lin2017robust}.
However, it
cannot utilize data sparsity.
This is problematic in applications such as structure from motion
\cite{koenderink1991affine} and recommender system \cite{mnih2008probabilistic},
where the data matrices, though large, are often sparse.

Though the $\ell_1$-loss used in RMF is 
more robust than the 
$\ell_2$, still it may 
not be robust enough for outliers.  
Recently, 
better empirical performance is obtained 
in total-variation image denosing
by using
the
$\ell_0$-loss instead 
\cite{yan2013restoration}, and 
in sparse coding 
the capped-$\ell_1$ loss 
\cite{jiang2015robust}.
A similar observation is also made on the $\ell_1$-regularizer in
sparse learning and low-rank matrix learning 
\cite{gong2013general,yao2018large,zuo2013generalized}.
To alleivate this problem,
various nonconvex regularizers
have been introduced.
Examples include the Geman penalty \cite{geman1995nonlinear},
Laplace penalty \cite{trzasko2009highly},
log-sum penalty (LSP) \cite{candes2008enhancing} 
	minimax concave penalty (MCP) \cite{zhang2010nearly},
	and the smooth-capped-absolute-deviation (SCAD) penalty \cite{fan2001variable}.
These regularizers 
are similar 
in shape
to Tukey's biweight function in robust statistics \cite{huber2011robust},
	which flattens for large values.
	Empirically,
they achieve much better 
performance than $\ell_1$ on tasks such as feature selection \cite{gong2013general,zuo2013generalized}
and image denoising \cite{yao2018large}.

In this paper, we propose to improve the 
robustness of
RMF 
by using these nonconvex functions
(instead of $\ell_1$ or $\ell_2$)
as the loss function.
The resultant optimization problem is difficult, and existing RMF solvers
cannot be used.
As in RMF-MM, we rely on the more flexible MM optimization technique, and
a new MM surrogate is proposed.
To improve scalabiltiy,
we transform the surrogate to its dual and then solve  it
with  the accelerated proximal gradient (APG) algorithm \cite{beck2009fast,nesterov2013gradient}.
Data sparsity can also be exploited in the design of the APG algorithm.
As for its convergence analysis, proof techniques in RMF-MM cannot be used as the loss is 
no longer convex. 
Instead,
we develop
new proof techniques 
based on the 
Clarke subdifferential \cite{clarke1990optimization},
and show that convergence to a critical point can be guaranteed.
Extensive experiments  on both synthetic and real-world data sets
demonstrate superiority of the proposed algorithm over the state-of-the-art in terms of
both accuracy and scalability.



\noindent
{\bf Notation.}
For scalar $x$, $\sign{x} = 1$ if $x > 0$, 0 if $x  = 0$, and $-1$ otherwise.
For a vector $x$, $\Diag{x}$ constructs a diagonal matrix $X$ with $X_{ii} = x_i$.
For a matrix $X$,
$\NM{X}{F} = (\sum_{i,j} X_{ij}^2)^{1/2}$ is its Frobenius norm,
$\NM{X}{1} = \sum_{i, j} |X_{ij}|$ is its $\ell_1$-norm,
and
$\nnz{X}$ is the number of nonzero elements in $X$.
For a square matrix $X$,
$\Tr{X} = \sum_{i} X_{ii}$ is its trace.
For two matrices $X, Y$,
$\odot$ denotes element-wise product.
For a smooth function $f$,
$\nabla f$ is its gradient.
For a convex $f$,
$G \in \partial f(X) = \{ U: f(Y) \ge f(X) +  \Tr{U^\top (Y - X)} \}$ is a subgradient.


\section{Related Work}

\subsection{Majorization Minimization}

\label{sec:mm}

Majorization minimization (MM)
is a general technique to make difficult optimization problems easier
\cite{hunter2004tutorial,lange2000optimization}.
Consider 
a function
$h(X)$, which is
hard to optimize.
Let the iterate at the $k$th 
MM
iteration be $X^k$.
The next iterate 
is generated 
as
$X^{k + 1}  =  X^k  + \arg\min_{X} f^k (X; X^k)$,
where $f^k$ is a surrogate that is being optimized instead of $h$.
A good surrogate should have the following properties
\cite{lange2000optimization}:
(i) $h(X^k + X) \le f^k (X; X^k)$ for any $X$;
(ii) $0 = \arg\min_{X} \left(  f^k (X; X^k)  -  h(X^k  +  X) \right)$ and $h(X^k) = f^k(0; X^k)$; and
(iii) $f^k$ is convex on $X$.
MM only guarantees that the objectives obtained in successive
iterations are non-increasing, but
does not guarantee convergence of $X^k$
\cite{hunter2004tutorial,lange2000optimization}.




\subsection{Robust Matrix Factorization (RMF)}

\label{sec:review:rmf}

In matrix factorization (MF), the data matrix $M \in \R^{m \times n}$ is approximated by
$U V^{\top}$,
where $U \in \R^{m \times r}$, $V \in \R^{n \times r}$ and $r \ll \min( m, n )$ is
the rank.
In applications such as structure from motion (SfM) \cite{basri2007photometric} and recommender systems \cite{mnih2008probabilistic},
some entries of $M$ may be missing.
In general,
the MF problem can 
be formulated as:
$\min_{U,V}
\frac{1}{2} \NM{W  \odot  ( M  -  U V^{\top} ) }{F}^2
+ \frac{\lambda}{2}( \NM{U}{F}^2 + \NM{V}{F}^2 )$,
where 
$W \in \{ 0, 1 \}^{m \times n}$ 
contain indices to the observed entries in $M$ (with $W_{ij} = 1$ if $M_{ij}$ is observed, and 0 otherwise),
and 
$\lambda \ge 0$ is a regularization parameter.
The $\ell_2$-loss 
is sensitive to outliers.  
In \cite{de2003framework},
it is replaced by the $\ell_1$-loss,
leading to robust matrix factorization (RMF):
\begin{equation}
\min_{U,V} 
\NM{W \odot ( M - U V^{\top} ) }{1}
+ \frac{\lambda}{2} ( \NM{U}{F}^2 + \NM{V}{F}^2 ).
\label{eq:rmf}
\end{equation}
Many RMF solvers have been developed \cite{cambier2016robust,eriksson2010efficient,he2012incremental,cabral2013unifying,kim2015efficient,lin2017robust,meng2013cyclic,zheng2012practical}.
However, as the objective in \eqref{eq:rmf} is neither convex nor smooth,
these solvers lack scalability,
robustness
and/or convergence guarantees.
Interested readers are referred to Section~2 of \cite{lin2017robust} for details.

Recently,
the RMF-MM algorithm \cite{lin2017robust}
solves \eqref{eq:rmf} using MM.  Let the $k$th iterate be $(U^k,
V^k)$.  RMF-MM  tries to find increments $(\bar{U}, \bar{V})$ that should be added
to  obtain
the target $(U, V)$:
\begin{equation} 
\label{eq:uv}
U=U^k+ \bar{U},
\quad
V = V^k + \bar{V}.
\end{equation}
Substituting into (\ref{eq:rmf}),
the objective 
can be rewritten as
$H^k(  \bar{U}, \bar{V} ) 
\equiv \NM{W \odot ( M \! - \! ( U^k + \bar{U} ) ( V^k + \bar{V} )^{\top} )}{1}
+ \frac{\lambda}{2} \NM{U^k + \bar{U}}{F}^2
+ \frac{\lambda}{2} \NM{V^k + \bar{V}}{F}^2$.
The following Proposition constructs a surrogate 
$F^k$ 
of $H^k$ 
that 
satisfies properties (i) and (ii) 
in Section~\ref{sec:mm}.
Unlike $H^k$, 
$F^k$ is jointly convex in $(\bar{U}, \bar{V})$.


\begin{proposition} \cite{lin2017robust}
\label{pr:rmf:surr}
Let $\nnz{ W_{(i,:)} }$ (resp. $\nnz{W_{(:,j)}}$)
be the number of nonzero elements in the $i$th row (resp. $j$th column) of $W$, 
$\Lambda_r
= \text{Diag}
(\sqrt{\smash[b]{ \nnz{ W_{(1,:)} } }}, \dots, \sqrt{\smash[b]{ \nnz{W_{(m,:)} }} })$,
and $\Lambda_c  = \text{Diag}(  \sqrt{\smash[b]{  \nnz{ W_{(:,1)}} } }, \dots, \sqrt{\smash[b]{
		\nnz{ W_{(:, n)} }}})$.
Then,
$H^k ( \bar{U}, \bar{V} ) \le F^k( \bar{U}, \bar{V} )$,
where
\begin{align}
F^k(\bar{U}, \bar{V}) 
\equiv
& \NM{W \! \odot \! ( M - U^k (V^k)^{\top} - 
\bar{U} (V^k)^{\top} - U^k \bar{V}^{\top} )}{1} 
\notag
\\
& + \frac{\lambda}{2} \NM{U^k + \bar{U}}{F}^2 + \frac{1}{2} \NM{\Lambda_r \bar{U}}{F}^2
+ \frac{\lambda}{2} \NM{V^k + \bar{V}}{F}^2 + \frac{1}{2} \NM{\Lambda_c \bar{V}}{F}^2.
\label{eq:surr}
\end{align}
Equality holds iff $( \bar{U}, \bar{V} ) = (0, 0)$.
\end{proposition}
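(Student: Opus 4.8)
The plan is to reduce the stated inequality to a comparison of the two $\ell_1$ data-fitting terms alone. The Tikhonov terms $\frac{\lambda}{2}\NM{U^k+\bar U}{F}^2$ and $\frac{\lambda}{2}\NM{V^k+\bar V}{F}^2$ appear identically in $H^k$ and in $F^k$, hence cancel in $F^k-H^k$ and may be discarded. Expanding the product, $(U^k+\bar U)(V^k+\bar V)^\top = U^k(V^k)^\top + \bar U(V^k)^\top + U^k\bar V^\top + \bar U\bar V^\top$; writing $A \equiv M - U^k(V^k)^\top - \bar U(V^k)^\top - U^k\bar V^\top$ for the matrix whose $W$-masked $\ell_1$-norm is the data term of $F^k$, the data term of $H^k$ is $\NM{W\odot(A-\bar U\bar V^\top)}{1}$. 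Thus the whole proposition reduces to
\[
\NM{W\odot(A-\bar U\bar V^\top)}{1} \;\le\; \NM{W\odot A}{1} + \tfrac12\NM{\Lambda_r\bar U}{F}^2 + \tfrac12\NM{\Lambda_c\bar V}{F}^2 ,
\]
together with the equality claim.

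Next I would split off the cross term by the triangle inequality: since $W\odot(A-\bar U\bar V^\top) = W\odot A - W\odot(\bar U\bar V^\top)$, we have $\NM{W\odot(A-\bar U\bar V^\top)}{1} \le \NM{W\odot A}{1} + \NM{W\odot(\bar U\bar V^\top)}{1}$, so it suffices to establish the key estimate $\NM{W\odot(\bar U\bar V^\top)}{1} \le \tfrac12\NM{\Lambda_r\bar U}{F}^2 + \tfrac12\NM{\Lambda_c\bar V}{F}^2$. Let $\bar u_i\in\R^r$ denote the $i$th row of $\bar U$ and $\bar v_j\in\R^r$ the $j$th row of $\bar V$, so that $(\bar U\bar V^\top)_{ij}=\bar u_i^\top\bar v_j$. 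By Cauchy--Schwarz and then the AM--GM inequality, for every $(i,j)$,
\[
|\bar u_i^\top\bar v_j| \;\le\; \SQ{\bar u_i}\SQ{\bar v_j} \;\le\; \tfrac12\SQ{\bar u_i}^2 + \tfrac12\SQ{\bar v_j}^2 .
\]
Multiplying by $W_{ij}\ge 0$ and summing over all $i,j$, and using $\sum_j W_{ij}=\nnz{W_{(i,:)}}$ and $\sum_i W_{ij}=\nnz{W_{(:,j)}}$, the right-hand side becomes $\tfrac12\sum_i \nnz{W_{(i,:)}}\SQ{\bar u_i}^2 + \tfrac12\sum_j\nnz{W_{(:,j)}}\SQ{\bar v_j}^2$, which equals $\tfrac12\NM{\Lambda_r\bar U}{F}^2 + \tfrac12\NM{\Lambda_c\bar V}{F}^2$ by the definitions of $\Lambda_r$ and $\Lambda_c$. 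Chaining the two inequalities gives $H^k(\bar U,\bar V)\le F^k(\bar U,\bar V)$.

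For the equality characterization, the ``if'' direction is immediate: at $(\bar U,\bar V)=(0,0)$ the two added quadratics vanish, $\bar U\bar V^\top=0$, and the two $\ell_1$ terms coincide, so $H^k(0,0)=F^k(0,0)$. For the ``only if'' direction I would trace the tightness conditions back through the chain: $H^k=F^k$ forces the triangle step to be tight (so on each observed entry $A_{ij}$ and $\bar u_i^\top\bar v_j$ have opposite sign or one of them is zero) and every use of Cauchy--Schwarz and AM--GM to be tight (so $\bar u_i\parallel\bar v_j$ and $\SQ{\bar u_i}=\SQ{\bar v_j}$ for every observed pair $(i,j)$). Propagating these identities along the bipartite support graph of $W$ and invoking the standing non-degeneracy of the observation pattern (no all-zero rows or columns of $W$), one is left with the trivial increment $(\bar U,\bar V)=(0,0)$.

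The forward inequality is routine — one triangle inequality followed by a row-/column-wise Young inequality — so I expect the main obstacle to be the equality characterization, where the tightness conditions of the triangle step and of Cauchy--Schwarz/AM--GM have to be reconciled over the support of $W$; getting this watertight is where a mild non-degeneracy hypothesis on $(U^k,V^k)$ and $W$ will likely be needed.
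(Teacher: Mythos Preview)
Your forward-inequality argument is essentially the paper's own: Proposition~\ref{pr:rmf:surr} is cited rather than proved directly, but the paper proves its generalization (Proposition~\ref{pr:bndext}) via Lemma~\ref{lem:realW}, and specializing $\dot W^k$ to $W$ gives exactly your chain --- triangle inequality to split off $W\odot(\bar U\bar V^\top)$, then entrywise Cauchy--Schwarz followed by AM--GM, summed against the row/column counts of $W$.

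On the equality characterization you are, if anything, more careful than the paper. The paper simply asserts in Lemma~\ref{lem:realW} that equality in the key estimate $\NM{W\odot(\bar U\bar V^\top)}{1}\le \tfrac12\NM{\Lambda_r\bar U}{F}^2+\tfrac12\NM{\Lambda_c\bar V}{F}^2$ forces $(\bar U,\bar V)=(0,0)$, without carrying out the tightness analysis you sketch. Your instinct that extra non-degeneracy is needed is correct: with $W$ all ones and every row of $\bar U$ and of $\bar V$ equal to a common nonzero vector, Cauchy--Schwarz and AM--GM are tight at every entry, so the key estimate is an equality; taking $M=0$ and $U^k=V^k=0$ then makes the triangle step tight as well, giving $H^k=F^k$ at a nonzero increment. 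So the ``only if'' direction, as stated, does not follow from the argument in either your proposal or the paper without additional hypotheses on $(U^k,V^k,M)$.
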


Because of the coupling of 
$\bar{U}, V^k$ 
(resp.  $U^k, \bar{V}$)
in $\bar{U} (V^k)^{\top}$ (resp. $U^k \bar{V}^{\top}$) in 
(\ref{eq:surr}),
$F^k$
is still difficult to optimize.
To address this problem, 
RMF-MM 
uses 
the 
LADMPSAP algorithm \cite{lin2015linearized}, which is
a multi-block 
variant 
of the alternating direction method of multipliers (ADMM)
\cite{boyd2011distributed}.

RMF-MM has a space complexity of $O(m n)$, 
and a time complexity of $O( m n r I K )$,  
where $I$ is the number of (inner) LADMPSAP iterations and 
$K$ is the number of (outer) RMF-MM iterations.
These grow linearly with the matrix size, and can be expensive on large data sets.
Besides, as discussed in Section~\ref{sec:intro}, the $\ell_1$-loss may still be sensitive to outliers.


\section{Proposed Algorithm}


\subsection{Use a More Robust Nonconvex Loss}

\label{sec:penalty}

In this paper, we improve robustness of RMF by using a general nonconvex loss instead of the $\ell_1$-loss.
Problem~(\ref{eq:rmf}) is then changed to:
\begin{align}
\min_{U,V} \dot{H}(U, V)
\equiv
\sum_{i = 1}^m \sum_{j = 1}^n
W_{ij} \phi\left( |  M_{ij} - [  U V^{\top} ]_{ij} | \right) 
+ \frac{\lambda}{2}(  \NM{U}{F}^2 + \NM{V}{F}^2 ) ,
\label{eq:mrmf}
\end{align}
where $\phi$ is nonconvex.
We assume the following on $\phi$:
\begin{assumption} \label{ass:phi}
$\phi(\alpha)$ is concave,
smooth and 
strictly increasing on $\alpha \ge 0$.
\end{assumption}

Assumption~\ref{ass:phi} is satisfied by many nonconvex functions,
including the Geman, Laplace and LSP penalties mentioned in
Section~\ref{sec:intro}, and slightly modified variants of the MCP
	and SCAD penalties.
	Details can be found in Appendix~\ref{app:modmcp}.
Unlike previous papers
\cite{gong2013general,zuo2013generalized,yao2018large}, we use
these nonconvex functions 
as the \textit{loss},
not as regularizer.
The $\ell_1$ 
also satisfies Assumption~\ref{ass:phi}, and thus (\ref{eq:mrmf}) includes (\ref{eq:rmf}).

When the $i$th row of $W$ is zero, the $i$th row of $U$ obtained 
is zero because of the
$\NM{U}{F}^2$ regularizer.
Similarly,
when the $i$th column of $W$ is zero,
the corresponding column in $V$ is zero.
To avoid this trivial solution, 
we make the following Assumption, which is also used in 
matrix completion  
\cite{candes2009exact} and RMF-MM.
\begin{assumption} \label{ass:wht}
	$W$ has no zero row or column.
\end{assumption}


\subsection{Constructing the Surrogate}

\label{sec:newsurr}

Problem (\ref{eq:mrmf})
is difficult to solve, and
existing RMF solvers cannot be used as they rely crucially on the $\ell_1$-norm.
In this Section, we use the more flexible MM technique as in 
RMF-MM.
However, 
its surrogate construction scheme 
cannot be used here.
RMF-MM 
uses
the 
convex
$\ell_1$ loss,
and only needs to handle nonconvexity resulting from the product $U V^{\top}$
in \eqref{eq:rmf}.
Here,
nonconvexity 
in (\ref{eq:mrmf})
comes from both from the loss and 
$U V^{\top}$.




The following Proposition 
first obtains a convex upper bound of the nonconvex $\phi$
using Taylor expansion. 
An illustration 
is shown in Figure~\ref{fig:phibnd}. 
Note that this upper bound is simply a re-weighted $\ell_1$, with scaling factor
$\phi'(|\b|)$ and offset $\phi(|\b|) - \phi'(|\b|) |\b|$.
As one may expect, recovery of the $\ell_1$ makes optimization easier.
It is known that the LSP, when used as a regularizer, can be interpreted as re-weighted $\ell_1$ regularization \cite{candes2009exact}.
Thus, Proposition~\ref{pr:bndphi} includes this as a special case.

\begin{proposition} 
\label{pr:bndphi}
For any given $\beta \in \R$,
$\phi(|\a|) \le \phi'(|\b|) | \a| + (  \phi(|\b|) - \phi'(|\b|) |\b| )$,
and the equality holds iff $\a= \pm \b$.
\end{proposition}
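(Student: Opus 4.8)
The plan is to reduce the two-variable inequality to a one-variable statement about the concave function $\phi$ and then invoke the standard first-order characterization of concavity. Concretely, I would first fix the sign issue: since only $|\a|$ and $|\b|$ appear, it suffices to prove that for all $s, t \ge 0$ one has $\phi(s) \le \phi'(t)\, s + (\phi(t) - \phi'(t)\, t)$, i.e. $\phi(s) \le \phi(t) + \phi'(t)(s - t)$, and then substitute $s = |\a|$, $t = |\b|$. The right-hand side is exactly the first-order Taylor approximation of $\phi$ at $t$, so the claim is the familiar fact that a concave differentiable function lies below each of its tangent lines.

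The key steps, in order, are: (1) invoke Assumption~\ref{ass:phi}, which gives that $\phi$ is concave and smooth (hence differentiable) on $[0,\infty)$; (2) apply the first-order condition for concavity on the interval, $\phi(s) \le \phi(t) + \phi'(t)(s-t)$ for all $s,t \ge 0$ --- this can itself be derived, if one wants to be self-contained, from the mean value theorem together with monotonicity of $\phi'$ (which is nonincreasing by concavity), or simply cited; (3) set $s = |\a|$ and $t = |\b|$ and rearrange the right-hand side to the form $\phi'(|\b|)\,|\a| + (\phi(|\b|) - \phi'(|\b|)\,|\b|)$ stated in the Proposition.

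For the equality case I would argue as follows. Equality in the tangent-line bound $\phi(s) \le \phi(t) + \phi'(t)(s-t)$ forces $s = t$: if $s \ne t$, strict concavity arguments do not immediately apply since $\phi$ is only assumed concave, but \emph{strict} monotonicity of $\phi$ (also in Assumption~\ref{ass:phi}) rules out the degenerate affine case --- a concave function that agrees with a tangent line on a nondegenerate segment is affine there, which would contradict $\phi' $ being positive and nonincreasing unless it is constant, and a constant-slope piece is incompatible with... actually the cleanest route is: concavity gives $\phi'$ nonincreasing, and the gap $g(s) := \phi(t) + \phi'(t)(s-t) - \phi(s)$ has $g(t)=0$, $g'(s) = \phi'(t) - \phi'(s)$, which is $\le 0$ for $s < t$ and $\ge 0$ for $s > t$, so $g$ is minimized at $s=t$ with value $0$; strict monotonicity-driven strict concavity then gives $g(s) > 0$ for $s \ne t$. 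Translating back, $|\a| = |\b|$, i.e. $\a = \pm\b$.

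The main obstacle is the equality characterization rather than the inequality itself: the inequality is essentially a one-line appeal to concavity, but pinning down "equality iff $\a = \pm\b$" requires using the \emph{strict} part of Assumption~\ref{ass:phi} carefully, since plain concavity alone would allow affine stretches where equality holds on an interval. I expect the write-up to spend most of its words there, showing that strict monotonicity of $\phi$ upgrades concavity to the strictness needed to conclude $g(s) > 0$ whenever $s \ne t$.
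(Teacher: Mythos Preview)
Your approach to the inequality is exactly the paper's: invoke concavity of $\phi$ on $[0,\infty)$ to get the tangent-line bound $\phi(s)\le\phi(t)+\phi'(t)(s-t)$, then substitute $s=|\a|$, $t=|\b|$. The paper's proof is literally those two lines.

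For the equality case, the paper is much terser than you: it simply asserts ``As $\phi$ is concave and strictly increasing on $\R^+$, equality holds iff $\b=\pm\a$'' with no further argument. You are right to flag this as the delicate step, and your $g(s)$ analysis is the natural way to unpack it. But note that your final move, ``strict monotonicity-driven strict concavity,'' does not actually go through: strict monotonicity of $\phi$ does \emph{not} imply strict concavity (take $\phi(x)=x$, which the paper explicitly allows under Assumption~\ref{ass:phi}, and for which the tangent line coincides with $\phi$ everywhere). So the ``iff'' as stated really needs strict concavity of $\phi$, not merely strict monotonicity; the paper's one-line justification has the same gap. In practice all the genuinely nonconvex penalties in Table~\ref{tab:regdef} are strictly concave, so the claim holds for them, but your instinct that Assumption~\ref{ass:phi} alone is insufficient for the equality characterization is correct.
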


\begin{figure}[ht]
\centering
\subfigure[Geman.]
{\includegraphics[width=0.32\columnwidth]{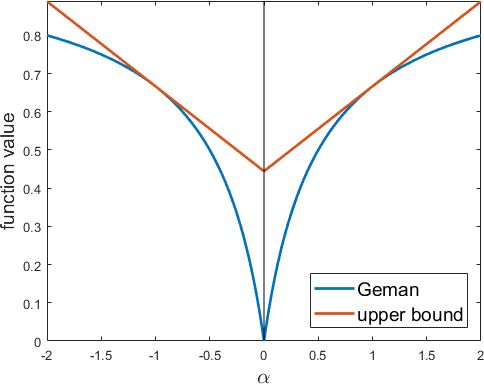}}
\subfigure[Laplace.]
{\includegraphics[width=0.32\columnwidth]{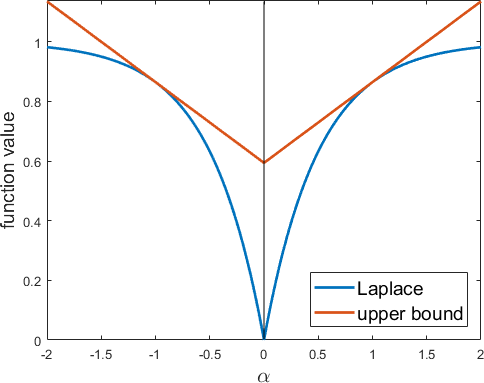}}
\subfigure[LSP.]
{\includegraphics[width=0.32\columnwidth]{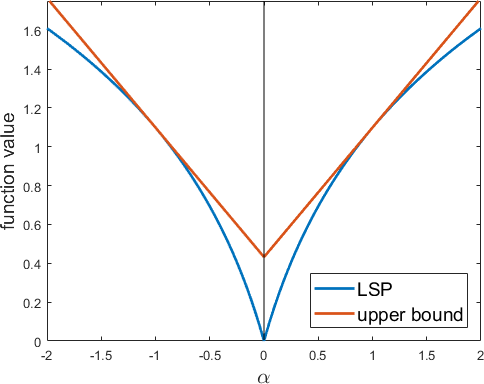}}
\subfigure[modified MCP.]
{\includegraphics[width=0.32\columnwidth]{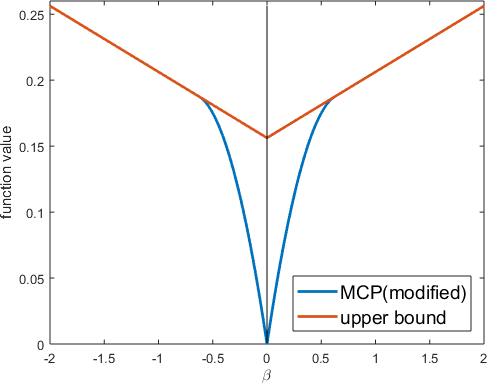}
	\label{fig:mcp}}
\subfigure[modified SCAD.]
{\includegraphics[width=0.32\columnwidth]{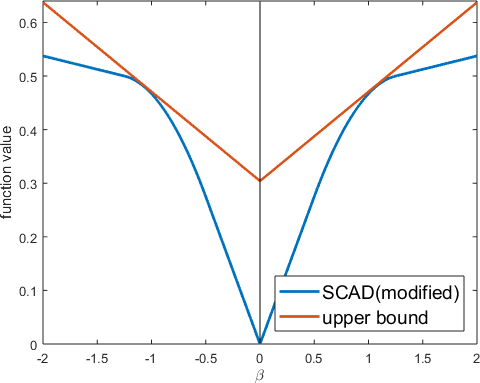}
	\label{fig:scad}}
\caption{Upper bounds
	for the various nonconvex penalities (see Table~\ref{tab:regdef} in Appendix~\ref{sec:defncvxf})
	$\beta = 1$, $\theta = 2.5$ for SCAD and $\theta = 0.5$ for the others;
	and $\delta = 0.05$ for MCP and SCAD.}
\label{fig:phibnd}
\end{figure}


Given the current iterate $(U^k, V^k)$, we want to find increments $(\bar{U},
\bar{V})$ as in \eqref{eq:uv}.
$\dot{H}$ in  \eqref{eq:mrmf}  can be rewritten as:
$\dot{H}^k( \bar{U}, \bar{V} ) 
\equiv \sum_{i = 1}^m \sum_{j = 1}^n
W_{ij} 
\phi( | M_{ij}  - [ (U^k \! + \bar{U}) (V^k + \bar{V})^{\top} ]_{ij} | )
+ \frac{\lambda}{2} \NM{U^k + \bar{U}}{F}^2
+ \frac{\lambda}{2} \NM{V^k + \bar{V}}{F}^2$.
Using Proposition~\ref{pr:bndphi},
we obtain the following convex upper bound for 
$\dot{H}^k$.

\begin{corollary} 
\label{cor:loss}
$\dot{H}^k (\bar{U}, \bar{V})
\le 
b^k
+ \frac{\lambda}{2} \NM{U^k + \bar{U}}{F}^2
+ \frac{\lambda}{2} \NM{V^k + \bar{V}}{F}^2 
+ \|\dot{W}^k \odot ( M - U^k (V^k)^{\top} 
-  \bar{U} (V^k)^{\top} 
-  U^k \bar{V}^{\top} 
-  \bar{U} \bar{V}^{\top} ) \|_1$, 
where $b^k = \sum_{i = 1}^m \sum_{j = 1}^n W_{ij} ( \phi( |  [ U^k (V^k)^{\top} ]_{ij} | )
- A^k_{ij} | [ U^k (V^k)^{\top} ]_{ij} | )$, $\dot{W}^k$ $= A^k \odot W$,
and $A^k_{ij} = \phi'( | [U^k (V^k)^{\top}]_{ij} | )$.
\end{corollary}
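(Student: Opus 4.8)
The plan is to obtain the bound by applying Proposition~\ref{pr:bndphi} to each entrywise term of the loss in $\dot{H}^k$ and then regrouping the pieces; once Proposition~\ref{pr:bndphi} is in hand, nothing beyond it and the sign of $\phi'$ is needed, so the argument is essentially a matter of bookkeeping and I do not expect a deep obstacle.

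First I would expand the bilinear term as $(U^k+\bar{U})(V^k+\bar{V})^\top = U^k(V^k)^\top + \bar{U}(V^k)^\top + U^k\bar{V}^\top + \bar{U}\bar{V}^\top$, so that for each index pair $(i,j)$ the argument of $\phi$ in $\dot{H}^k$ is the $(i,j)$ entry, call it $R_{ij}$, of the matrix $M - U^k(V^k)^\top - \bar{U}(V^k)^\top - U^k\bar{V}^\top - \bar{U}\bar{V}^\top$; that is, $R$ is precisely the matrix that appears inside the $\ell_1$-norm on the right-hand side. Writing $\b_{ij}$ for the point at which $A^k_{ij}=\phi'(|\b_{ij}|)$ is evaluated (so that $b^k = \sum_{i,j} W_{ij}(\phi(|\b_{ij}|) - A^k_{ij}|\b_{ij}|)$), Proposition~\ref{pr:bndphi} applied with $\a = R_{ij}$ gives $\phi(|R_{ij}|) \le A^k_{ij}\,|R_{ij}| + \bigl(\phi(|\b_{ij}|) - A^k_{ij}\,|\b_{ij}|\bigr)$ for every $(i,j)$. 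Since $W_{ij}\ge 0$, multiplying through by $W_{ij}$ preserves the inequality, and the pairs with $W_{ij}=0$ contribute $0$ to both sides; summing over all $(i,j)$ then yields $\sum_{i,j} W_{ij}\phi(|R_{ij}|) \le \sum_{i,j} W_{ij}A^k_{ij}\,|R_{ij}| + b^k$.

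It remains to identify the surviving sum with the $\ell_1$ term. By Assumption~\ref{ass:phi} the function $\phi$ is increasing, so $A^k_{ij}=\phi'(|\b_{ij}|)\ge 0$, hence $W_{ij}A^k_{ij}=|W_{ij}A^k_{ij}|=|[\dot{W}^k]_{ij}|$ with $\dot{W}^k = A^k\odot W$; therefore $\sum_{i,j} W_{ij}A^k_{ij}\,|R_{ij}| = \sum_{i,j}|[\dot{W}^k]_{ij}|\,|R_{ij}| = \NM{\dot{W}^k\odot(M - U^k(V^k)^\top - \bar{U}(V^k)^\top - U^k\bar{V}^\top - \bar{U}\bar{V}^\top)}{1}$. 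Adding back the two regularizers $\frac{\lambda}{2}\NM{U^k+\bar{U}}{F}^2$ and $\frac{\lambda}{2}\NM{V^k+\bar{V}}{F}^2$, which are carried over unchanged from $\dot{H}^k$, gives exactly the claimed inequality.

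The points that need care are purely mechanical: correctly expanding $(U^k+\bar{U})(V^k+\bar{V})^\top$ and matching the result against the argument of the $\ell_1$-norm, dropping the absolute value on the nonnegative weights $W_{ij}A^k_{ij}$, and keeping the linearization point consistent between the definitions of $A^k$ and $b^k$. For completeness I would also note that, by the equality case of Proposition~\ref{pr:bndphi} ($\a=\pm\b$), the bound becomes an equality exactly when every $R_{ij}=\pm\b_{ij}$; with the linearization taken at the current residual $M-U^k(V^k)^\top$ this occurs at $(\bar{U},\bar{V})=(0,0)$, which is the mechanism that makes the resulting surrogate touch $\dot{H}^k$ at the current iterate, that is, satisfies property~(ii) of Section~\ref{sec:mm}.
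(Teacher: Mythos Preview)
Your proposal is correct and follows exactly the approach the paper indicates: apply Proposition~\ref{pr:bndphi} entrywise to the loss in $\dot{H}^k$ and then substitute $U=U^k+\bar U$, $V=V^k+\bar V$. You have simply spelled out the bookkeeping (nonnegativity of $W_{ij}$ and of $A^k_{ij}$ via Assumption~\ref{ass:phi}, and the identification of the weighted sum with the $\ell_1$-norm) that the paper leaves implicit.
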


The product $\bar{U} \bar{V}^{\top}$ 
still couples $\bar{U}$ and $\bar{V}$ together. 
As $\dot{H}^k$ is similar to $H^k$ in 
Section~\ref{sec:review:rmf},
one may want to reuse Proposition~\ref{pr:rmf:surr}.
However, 
Proposition~\ref{pr:rmf:surr} holds only when $W$ is a binary matrix,
while $\dot{W}^k$ here is real-valued.
Let 
$\Lambda_r^k = \Diag{\sqrt{\smash[b]{\ssum{\dot{W}_{(1, :)}^k}}}, \dots ,
	\sqrt{\smash[b]{\ssum{\dot{W}_{(m, :)}^k}}}}$
and
$\Lambda_c^k  = \Diag{\sqrt{\smash[b]{\ssum{\dot{W}_{(:, 1)}^k}}}, \dots, \sqrt{\smash[b]{\ssum{\dot{W}_{(:, n)}^k}} }}$.
The following Proposition shows that
$\dot{F}^k ( \bar{U}, \bar{V} ) 
\equiv 
\NM{\dot{W}^k \odot ( M - U^k (V^k)^{\top} - \bar{U} (V^k)^{\top} 
	- U^k \bar{V}^{\top} )}{1}
+ \frac{\lambda}{2} \NM{U^k + \bar{U}}{F}^2
+ \frac{1}{2} \NM{\Lambda_r^k \bar{U}}{F}^2
+ \frac{\lambda}{2} \NM{V^k + \bar{V}}{F}^2
+ \frac{1}{2} \NM{\Lambda_c^k \bar{V}}{F}^2
+ b^k$,
can be used as a surrogate.
Moreover, it can be 
easily 
seen that $\dot{F}^k$ qualifies as
a good surrogate
in Section~\ref{sec:mm}:
(a) $\dot{H}(\bar{U} + U^k, \bar{V} + V^k) 
\le \dot{F}^k(\bar{U}, \bar{V})$;
(b) $(0, 0) = \arg\min_{\bar{U}, \bar{V}} \dot{F}^k(\bar{U}, \bar{V}) - \dot{H}^k(\bar{U}, \bar{V})$ 
and $\dot{F}^k ( 0, 0 ) = \dot{H}(0, 0)$; and
(c) $\dot{F}^k$ is jointly convex in $\bar{U}, \bar{V}$.

\begin{proposition}
\label{pr:bndext}
$\dot{H}^k ( \bar{U}, \bar{V} ) \le \dot{F}^k ( \bar{U}, \bar{V} )$, with
equality holds iff $( \bar{U}, \bar{V} ) = (0, 0)$.
\end{proposition}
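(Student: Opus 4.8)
The plan is to start from the convex upper bound of Corollary~\ref{cor:loss} and notice that the only obstruction to reaching $\dot{F}^k$ is the bilinear term $\bar{U}\bar{V}^\top$, which will be absorbed into the two quadratic terms $\frac12\NM{\Lambda_r^k\bar{U}}{F}^2$ and $\frac12\NM{\Lambda_c^k\bar{V}}{F}^2$. Writing $R^k \equiv M - U^k(V^k)^\top - \bar{U}(V^k)^\top - U^k\bar{V}^\top$ for brevity, Corollary~\ref{cor:loss} reads $\dot{H}^k(\bar{U},\bar{V}) \le b^k + \frac{\lambda}{2}\NM{U^k+\bar{U}}{F}^2 + \frac{\lambda}{2}\NM{V^k+\bar{V}}{F}^2 + \NM{\dot{W}^k\odot(R^k - \bar{U}\bar{V}^\top)}{1}$, so, comparing with the definition of $\dot{F}^k$, it suffices to prove $\NM{\dot{W}^k\odot(R^k - \bar{U}\bar{V}^\top)}{1} \le \NM{\dot{W}^k\odot R^k}{1} + \frac12\NM{\Lambda_r^k\bar{U}}{F}^2 + \frac12\NM{\Lambda_c^k\bar{V}}{F}^2$.

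For this I would proceed in three elementary steps. First, the entrywise triangle inequality for $|\cdot|$ gives $\NM{\dot{W}^k\odot(R^k-\bar{U}\bar{V}^\top)}{1} \le \NM{\dot{W}^k\odot R^k}{1} + \NM{\dot{W}^k\odot(\bar{U}\bar{V}^\top)}{1}$. Second, since $\dot{W}^k_{ij} = A^k_{ij}W_{ij} \ge 0$ (nonnegative because $\phi$ is increasing by Assumption~\ref{ass:phi}) and $[\bar{U}\bar{V}^\top]_{ij} = \bar{U}_{(i,:)}\bar{V}_{(j,:)}^\top$, Cauchy--Schwarz followed by the scalar inequality $xy\le\frac12(x^2+y^2)$ yields $\NM{\dot{W}^k\odot(\bar{U}\bar{V}^\top)}{1} = \sum_{i,j}\dot{W}^k_{ij}\,|\bar{U}_{(i,:)}\bar{V}_{(j,:)}^\top| \le \sum_{i,j}\dot{W}^k_{ij}\,\SQ{\bar{U}_{(i,:)}}\,\SQ{\bar{V}_{(j,:)}} \le \tfrac12\sum_{i,j}\dot{W}^k_{ij}\big(\SQ{\bar{U}_{(i,:)}}^2 + \SQ{\bar{V}_{(j,:)}}^2\big)$. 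Third, regrouping the double sum and using $\sum_j\dot{W}^k_{ij} = \ssum{\dot{W}^k_{(i,:)}}$ (the $i$th diagonal entry of $(\Lambda_r^k)^2$) and symmetrically $\sum_i\dot{W}^k_{ij} = \ssum{\dot{W}^k_{(:,j)}}$, the last bound equals $\frac12\NM{\Lambda_r^k\bar{U}}{F}^2 + \frac12\NM{\Lambda_c^k\bar{V}}{F}^2$. Chaining the three steps proves the displayed inequality, hence $\dot{H}^k(\bar{U},\bar{V}) \le \dot{F}^k(\bar{U},\bar{V})$.

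For the equality claim, the ``if'' direction is immediate: at $(\bar{U},\bar{V})=(0,0)$ one has $\bar{U}\bar{V}^\top=0$, so the triangle, Cauchy--Schwarz and AM--GM steps are all identities, and Corollary~\ref{cor:loss} is tight there because Proposition~\ref{pr:bndphi} is tight when its argument equals $\pm\b$. The ``only if'' direction is where the real work lies, and it is the step I expect to be the main obstacle. I would trace tightness backwards through the chain: equality in Proposition~\ref{pr:bndphi} forces $|M_{ij}-[(U^k+\bar{U})(V^k+\bar{V})^\top]_{ij}| = |M_{ij}-[U^k(V^k)^\top]_{ij}|$ on every observed entry; equality in the scalar AM--GM and Cauchy--Schwarz steps forces $\SQ{\bar{U}_{(i,:)}}=\SQ{\bar{V}_{(j,:)}}$ and $\bar{U}_{(i,:)} = \pm\bar{V}_{(j,:)}$ for every $(i,j)$ with $\dot{W}^k_{ij}>0$ --- and here the strict-monotonicity part of Assumption~\ref{ass:phi} is essential, since $\phi'>0$ makes the support of $\dot{W}^k$ coincide with that of $W$. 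Assumption~\ref{ass:wht} (no zero row or column of $W$) then lets these per-entry constraints be propagated to every row of $\bar{U}$ and every column of $\bar{V}$, and, combined with the residual-preservation conditions above, should collapse $(\bar{U},\bar{V})$ to $(0,0)$. The inequality itself is a routine triangle $+$ Cauchy--Schwarz $+$ AM--GM argument; the delicate part is verifying that no nonzero increment can keep all of these bounds simultaneously tight, which is exactly where Assumptions~\ref{ass:phi} and~\ref{ass:wht} are both needed.
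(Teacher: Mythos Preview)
Your approach is essentially identical to the paper's: start from Corollary~\ref{cor:loss}, peel off the bilinear term $\bar{U}\bar{V}^\top$ via the triangle inequality, and bound $\NM{\dot{W}^k\odot(\bar{U}\bar{V}^\top)}{1}$ by Cauchy--Schwarz followed by AM--GM, regrouping the double sum into $\tfrac12\NM{\Lambda_r^k\bar{U}}{F}^2+\tfrac12\NM{\Lambda_c^k\bar{V}}{F}^2$ (the paper packages this last step as a separate Lemma). For the equality case the paper is in fact terser than you are: it simply invokes the equality condition of that lemma rather than tracing tightness through all three inequalities and the two Assumptions as you do.
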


\begin{remark}
In the special case where the $\ell_1$-loss is used,
$\dot{W}^k = W$, $b^k = 0$ 
$\Lambda_r^k = \Lambda_r$, and $\Lambda_c^k = \Lambda_c$.  The surrogate 
$\dot{F}^k ( \bar{U}, \bar{V} )$
then reduces to that in \eqref{eq:surr},
and Proposition~\ref{pr:bndext} becomes Proposition~\ref{pr:rmf:surr}.
\end{remark}






\subsection{Optimizing the Surrogate via APG on the Dual}

\label{sec:subsolve}


LADMPSAP, which is used in RMF-MM, can also be used to optimize 
$\dot{F}^k$.
However, 
the dual variable in LADMPSAP is a dense matrix, and 
cannot utilize possible sparsity of $W$.
Moreover, 
LADMPSAP converges at a rate  of $O(1/T)$ \cite{lin2015linearized},
which is slow.
In the following, we propose
a time- and space-efficient optimization procedure basesd on  
running the 
accelerated proximal gradient 
(APG) algorithm 
on the surrogate optimization problem's  dual. 
Note that 
while the primal problem has $O(m n)$
variables, 
the dual problem has only 
$\nnz{W}$ 
variables.




\subsubsection{Problem Reformulation}
Let $\Omega \equiv \{( i_1, j_1 ), \dots, ( i_{\nnz{W}}, j_{\nnz{W}} ) \}$
be the set containing
indices of the observed elements in
$W$,
$\mathcal{H}_{\Omega}(\cdot)$ be
the linear operator which maps a $\nnz{W}$-dimensional vector $x$ 
to the sparse matrix $X \in \R^{m \times n}$ with nonzero positions indicated by $\Omega$
(i.e., 
$X_{i_t j_t} = x_t$ where $(i_t, j_t)$ is the $t$th element in $\Omega$), and
$\mathcal{H}_{\Omega}^{-1}(\cdot)$ be the inverse operator of $\mathcal{H}_{\Omega}$.

\begin{proposition}
\label{pr:dual}
The dual problem of
$\min_{\bar{U}, \bar{V}} \dot{F}^k(
\bar{U}, \bar{V})$
is
\begin{eqnarray}
\min_{x \in \mathcal{W}^k} \mathcal{D}^k(x) & \equiv
&
\frac{1}{2} \Tr{ ( \mathcal{H}_{\Omega}(x) V^k - \lambda U^k )^{\top} 
A_r^k ( \mathcal{H}_{\Omega}(x) V^k - \lambda U^k ) } - \Tr{\mathcal{H}_{\Omega}(x)^{\top} M} 
\notag
\\
&& + \frac{1}{2} \Tr{(  \mathcal{H}_{\Omega}(x)^{\top} U^k  
	- \lambda V^k )^{\top} A_c^k ( \mathcal{H}_{\Omega}(x)^{\top}  U^k - 
	\lambda V^k )},
\label{eq:dual}
\end{eqnarray}
where 
$\mathcal{W}^k \equiv \{ x \in \R^{\nnz{W}} : | x_i | \le [ \dot{w}^k ]_{i}^{-1} \}$,
$\dot{w}^k = \mathcal{H}_{\Omega}^{-1}( \dot{W}^k )$,
$A_r^k = ( \lambda I + (\Lambda_r^k)^2 )^{-1}$,
and
$A_c^k = ( \lambda I + (\Lambda_c^k)^2 )^{-1}$.
From the obtained $x$, the primal 
$(\bar{U},\bar{V} )$
solution 
can be recovered as
$\bar{U} = A_r^k ( \mathcal{H}_{\Omega}(x) V^k - \lambda U^k )$
and
$\bar{V} = A_c^k (  \mathcal{H}_{\Omega}(x)^{\top} U^k - \lambda V^k )$.
\end{proposition}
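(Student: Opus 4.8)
The plan is to derive \eqref{eq:dual} by Fenchel--Rockafellar duality: replace the nonsmooth weighted-$\ell_1$ term in $\dot F^k$ by its variational (conjugate) representation, which turns $\min_{\bar U,\bar V}\dot F^k$ into a convex--concave saddle-point problem; then swap the minimization and maximization so that the inner minimization over $(\bar U,\bar V)$ becomes an unconstrained strongly convex quadratic solvable in closed form; finally substitute that solution back to read off the dual objective $\mathcal{D}^k$ together with the recovery formulas.

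First I would discard the additive constant $b^k$ (it does not change the argmin) and rewrite the $\ell_1$ term. Because $\phi$ is concave and strictly increasing (Assumption~\ref{ass:phi}), $\phi'>0$ on $[0,\infty)$, so $\dot{W}^k=A^k\odot W$ is nonnegative with exactly the support of $W$; hence $\NM{\dot{W}^k\odot R}{1}$ is a coordinatewise weighted absolute value on $\Omega$, whose Fenchel conjugate is the indicator function of the box $\mathcal{W}^k$, giving the identity $\NM{\dot{W}^k\odot R}{1}=\max_{x\in\mathcal{W}^k}\Tr{\mathcal{H}_{\Omega}(x)^{\top}R}$. Substituting $R=M-U^k(V^k)^{\top}-\bar U(V^k)^{\top}-U^k\bar V^{\top}$ rewrites $\min_{\bar U,\bar V}\dot F^k$ as $\min_{\bar U,\bar V}\max_{x\in\mathcal{W}^k}L(\bar U,\bar V,x)$, with $L$ affine in $x$ and jointly convex in $(\bar U,\bar V)$. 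Moreover $L(\cdot,\cdot,x)$ is strongly convex and coercive: since $\dot{W}^k$ shares the support of $W$ and $W$ has no zero row or column (Assumption~\ref{ass:wht}), the diagonal matrices $(\Lambda_r^k)^2,(\Lambda_c^k)^2$ are positive definite, so $\lambda I+(\Lambda_r^k)^2$ and $\lambda I+(\Lambda_c^k)^2$ are invertible even when $\lambda=0$, and $A_r^k,A_c^k$ are well defined.

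Next I would swap the min and the max. This is justified by Sion's minimax theorem --- $\mathcal{W}^k$ is compact and convex, $L$ is continuous, concave (affine) in $x$ and convex in $(\bar U,\bar V)$ --- or equivalently by Fenchel--Rockafellar strong duality, valid here because both primal domains are the whole space. The inner problem $\min_{\bar U,\bar V}L(\bar U,\bar V,x)$ then decouples into two independent unconstrained strongly convex quadratics. Using $\Tr{\mathcal{H}_{\Omega}(x)^{\top}\bar U(V^k)^{\top}}=\Tr{(\mathcal{H}_{\Omega}(x)V^k)^{\top}\bar U}$ and the symmetric identity for $\bar V$, setting the gradients to zero gives $(\lambda I+(\Lambda_r^k)^2)\bar U=\mathcal{H}_{\Omega}(x)V^k-\lambda U^k$ and $(\lambda I+(\Lambda_c^k)^2)\bar V=\mathcal{H}_{\Omega}(x)^{\top}U^k-\lambda V^k$, i.e.\ exactly the stated recovery formulas $\bar U=A_r^k(\mathcal{H}_{\Omega}(x)V^k-\lambda U^k)$ and $\bar V=A_c^k(\mathcal{H}_{\Omega}(x)^{\top}U^k-\lambda V^k)$.

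Finally I would substitute these minimizers back into $L$, using that $\min_Z\{\tfrac12\Tr{Z^{\top}BZ}-\Tr{C^{\top}Z}\}=-\tfrac12\Tr{C^{\top}B^{-1}C}$ with $B=(A_r^k)^{-1}$ (resp. $(A_c^k)^{-1}$); collecting the $x$-dependent terms, discarding the $x$-independent constants, and negating to turn the resulting $\max$ into the $\min$ of \eqref{eq:dual} produces $\mathcal{D}^k(x)$. I expect the main obstacle to be precisely this last step: the matrix bookkeeping of completing the square so that the various cross terms assemble into the compact quadratic form of \eqref{eq:dual}. A secondary point needing care is Step~1 --- one must check that the natural dual variable is the $\nnz{W}$-vector $x$ (equivalently the sparse matrix $\mathcal{H}_{\Omega}(x)$) rather than a dense $m\times n$ matrix, which is what makes the dual small --- and that $A_r^k,A_c^k$ are invertible wherever they are used, which is exactly what Assumption~\ref{ass:wht} provides.
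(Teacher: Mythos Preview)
Your approach is essentially the same as the paper's: rewrite the weighted $\ell_1$ term via its variational (conjugate) representation, interchange the order of optimization to obtain a $\max$--$\min$ problem, solve the inner unconstrained quadratic in $(\bar U,\bar V)$ by setting gradients to zero (which yields exactly the recovery formulas), and substitute back. The paper's proof is terser---it simply writes the $\max$--$\min$ form and sets $\nabla_{\bar U}\mathcal{P}=0$, $\nabla_{\bar V}\mathcal{P}=0$ without explicitly invoking Sion or checking invertibility of $\lambda I+(\Lambda_r^k)^2$, $\lambda I+(\Lambda_c^k)^2$---so your additional justifications (Sion's theorem, the role of Assumption~\ref{ass:wht}) only make the argument more rigorous, not different.
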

Problem~\eqref{eq:dual} 
can be solved
by 
the APG algorithm,
which has 
a 
convergence rate  of
$O(1/T^2)$ 
\cite{beck2009fast,nesterov2013gradient} and is
faster
than 
LADMPSAP.
As $\mathcal{W}^k$ involves only
$\ell_1$ constraints,
the proximal step can be easily computed with closed-form (details are in Appendix~\ref{app:prox})
and takes only $O( \nnz{W} )$ time.

The complete procedure, which will be called 
Robust Matrix Factorization with Nonconvex Loss (RMFNL) algorithm
is shown in Algorithm~\ref{alg:rmfnl}.
The surrogate is optimized via its dual in step~4.
The primal solution is recovered in step~5,
and $(U^k, V^k)$ are updated in step~6.

\begin{algorithm}[ht]
\caption{Robust matrix factorization using nonconvex loss (RMFNL) algorithm.}
\begin{algorithmic}[1]
\STATE initialize $U^1 \in \R^{m \times r}$ and $V^1 \in \R^{m \times r}$;
\FOR{$k = 1, 2, \dots, K$}

\STATE compute $\dot{W}^k$ 
in Corollary~\ref{cor:loss}
(only on the observed positions), and
$\Lambda_r^k, \Lambda_c^k$;
	
\STATE compute $x^k = \arg\min_{x \in \mathcal{W}^k} \mathcal{D}^k(x)$ in
Proposition~\ref{pr:dual} using APG;
	
\STATE $\bar{U}^k = A_r^k \left(  \mathcal{H}_{\Omega}(x^k) V^k - \lambda U^k \right)$, \;
$\bar{V}^k = A_c^k (  \mathcal{H}_{\Omega}( x^k )^{\top} U^k - \lambda V^k )$;
	
\STATE $U^{k + 1} = U^k + \bar{U}^k$, \; $V^{k + 1} = V^k + \bar{V}^k$;

\ENDFOR
\RETURN $U^{K + 1}$ and $V^{K + 1}$.
\end{algorithmic}
\label{alg:rmfnl}
\end{algorithm}

\subsubsection{Exploiting Sparsity}
A direct implementation of APG takes 
$O( mn )$ space
and 
$O( m n r)$ time  per iteration.
In the following, we show how 
these 
can be reduced
by exploiting sparsity fo $W$.

The objective in \eqref{eq:dual} involves $A_r^k, A_c^k$ and $\mathcal{W}^k$, which are all related to 
$\dot{W}^k$. 
Recall that
$\dot{W}^k$ 
in Corollary~\ref{cor:loss}
is sparse (as $W$ is sparse).
Thus, by exploting sparsity, 
constructing $A_r^k, A_c^k$ and $\mathcal{W}^k$ only take
$O(\nnz{W})$ time and space.

In each APG iteration,
one has to compute the 
gradient,
objective, 
and proximal step.
First, 
consider the gradient 
$\nabla \mathcal{D}^k(x)$ of the objective, which is equal to
\begin{eqnarray}
\invH{ A_r^k ( \mathcal{H}_{\Omega}(x) V^k - \lambda U^k ) (V^k)^{\top} }
+ \invH{U^k [  (U^k)^{\top} \mathcal{H}_{\Omega}(x) 
	- \lambda (V^k)^{\top} ] A_c^k} - \invH{ M }.
\!\!\!\!
\label{eq:grad}
\end{eqnarray}
The first term  can be rewritten as $\hat{g}^k = \invH{ Q^k (V^k)^{\top} }$, 
where $Q^k = A_r^k (  \mathcal{H}_{\Omega}(x) V^k - \lambda U^k ) $.
As $A_r^k$ is diagonal and $\mathcal{H}_{\Omega}(x)$ is sparse,
$Q^k$ can be computed as
$A_r^k ( \mathcal{H}_{\Omega}(x) V^k )  - \lambda ( A_r^k U^k )$ in $O( \nnz{W} r + m r )$ time,
where $r$ is the number of columns in $U^k$ and $V^k$.
Let the $t$th element in $\Omega$ be $(i_t, j_t)$.
By the definition of $\invH{\cdot}$,
we have $\hat{g}^k_t = \sum_{q = 1}^r Q^k_{i_t q} V^k_{j_t q}$, 
and this takes $O( \nnz{W} r + m r )$ time.
Similarly,
computing the second term in \eqref{eq:grad} takes $O( \nnz{W} r + n r )$ time.
Hence,
computing 
$\nabla \mathcal{D}^k(x)$ takes a total of $O(  \nnz{W} r + (m + n) r )$ time and $O(  \nnz{W} + (m + n) r )$ space
(the Algorithm
is shown 
in Appendix~\ref{app:compgrad}).
Similarly,
the objective can be obtained
in $O(  \nnz{W} r + (m + n) r )$ time
and $O(  \nnz{W} + (m + n) r )$ space (details are in Appendix~\ref{app:compobj}).
The proximal step takes $O(\nnz{W})$ time and space, as $x \in \R^{\nnz{W}}$.
Thus,
by exploiting sparsity,
the APG algorithm has a
space complexity of
$O(  \nnz{W} + (m + n) r )$
and iteration time complexity of $O(  \nnz{W} r + (m + n) r )$.
In comparison,
LADMPSAP needs $O( mn )$ space and iteration time complexity of 
$O( m n r )$.
A summary of the complexity results is shown in Figure~\ref{fig:compvar}.




\subsection{Convergence Analysis}

\label{sec:analysis}

In this section, we study the convergence of 
RMFNL.
Note that the proof technique in RMF-MM cannot be used, as it relies on convexity of
the $\ell_1$-loss while
$\phi$ in (\ref{eq:mrmf}) is nonconvex
(in particular, Proposition 1 in \cite{lin2017robust} fails). 
Moreover,
the proof of RMF-MM 
uses 
the subgradient.
Here,
as $\phi$ is  
nonconvex,
we will use the 
Clarke subdifferential
\cite{clarke1990optimization},
which 
generalizes
subgradients to nonconvex functions
(a brief introduction is in
Appendix~\ref{app:clarke}).
For the iterates $\{ X^k \}$ 
generated by RMF-MM,
it is guaranteed to have  a \textit{sufficient decrease}
on the objective 
$f$ 
in the following sense
\cite{lin2017robust}:
There exists a constant $\gamma > 0$ such that
$f(X^k) - f(X^{k + 1})
\ge \gamma \NM{X^k - X^{k + 1}}{F}^2, \forall k$.
The following Proposition shows that  RMFNL also achieves a sufficient decrease on
its objective. Moreover, the $\{(U^k ,V^k )\}$ sequence generated is bounded,
which has at least one limit point.

\begin{proposition}
\label{pr:limit}
For Algorithm~\ref{alg:rmfnl}, 
$\{( U^k, V^k ) \}$ is bounded,  and
has a sufficient decrease on $\dot{H}$.
\end{proposition}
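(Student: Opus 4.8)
The plan is to treat Algorithm~\ref{alg:rmfnl} as a majorization--minimization scheme and exploit Proposition~\ref{pr:bndext} (with its properties (a)--(c)) together with strong convexity of the surrogate $\dot{F}^k$; the Clarke subdifferential is \emph{not} needed for this statement (it only enters the later characterization of limit points). First I would establish that $\{\dot{H}(U^k,V^k)\}$ is non-increasing by chaining the surrogate inequalities. By definition $\dot{H}(U^{k+1},V^{k+1}) = \dot{H}^k(\bar{U}^k,\bar{V}^k)$; by Proposition~\ref{pr:bndext} this is $\le \dot{F}^k(\bar{U}^k,\bar{V}^k)$; since steps~4--5 of Algorithm~\ref{alg:rmfnl} return the minimizer of $\dot{F}^k$ (by strong duality for the convex program in Proposition~\ref{pr:dual}, assuming the inner APG is run to optimality) this is $\le \dot{F}^k(0,0)$; and by the equality case of Proposition~\ref{pr:bndext}, $\dot{F}^k(0,0) = \dot{H}^k(0,0) = \dot{H}(U^k,V^k)$. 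Hence $\dot{H}(U^{k+1},V^{k+1}) \le \dot{H}(U^k,V^k)$ for all $k$.

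From this monotonicity, $\dot{H}(U^k,V^k) \le \dot{H}(U^1,V^1) =: c_0 < \infty$. Since $\phi$ is strictly increasing on $[0,\infty)$, $\phi(|\cdot|) \ge \phi(0)$, so the loss part of $\dot{H}$ is bounded below by the constant $\nnz{W}\phi(0)$; using $\lambda > 0$ this gives $\frac{\lambda}{2}(\NM{U^k}{F}^2 + \NM{V^k}{F}^2) \le c_0 - \nnz{W}\phi(0)$, a uniform bound. Thus $\{(U^k,V^k)\}$ lies in a compact set and, by Bolzano--Weierstrass, has at least one limit point.

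For the sufficient decrease, the key observation is that $\dot{F}^k$ is $\lambda$-strongly convex in $(\bar{U},\bar{V})$: the $\ell_1$ term $\NM{\dot{W}^k\odot(\cdot)}{1}$ is convex (a norm after an affine map), $\frac12\NM{\Lambda_r^k\bar{U}}{F}^2$ and $\frac12\NM{\Lambda_c^k\bar{V}}{F}^2$ are convex quadratics, $b^k$ is constant, and $\frac{\lambda}{2}\NM{U^k+\bar{U}}{F}^2 + \frac{\lambda}{2}\NM{V^k+\bar{V}}{F}^2$ equals $\frac{\lambda}{2}(\NM{\bar{U}}{F}^2 + \NM{\bar{V}}{F}^2)$ plus an affine term, supplying strong-convexity modulus exactly $\lambda$, independent of $k$. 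Since $(\bar{U}^k,\bar{V}^k)$ minimizes $\dot{F}^k$, strong convexity at the minimizer yields $\dot{F}^k(0,0) \ge \dot{F}^k(\bar{U}^k,\bar{V}^k) + \frac{\lambda}{2}(\NM{\bar{U}^k}{F}^2 + \NM{\bar{V}^k}{F}^2)$. Combining with the chain above,
\begin{align*}
\dot{H}(U^k,V^k) - \dot{H}(U^{k+1},V^{k+1})
&= \dot{F}^k(0,0) - \dot{H}^k(\bar{U}^k,\bar{V}^k)\\
&\ge \dot{F}^k(0,0) - \dot{F}^k(\bar{U}^k,\bar{V}^k)\\
&\ge \frac{\lambda}{2}\left( \NM{\bar{U}^k}{F}^2 + \NM{\bar{V}^k}{F}^2 \right),
\end{align*}
and since $\bar{U}^k = U^{k+1}-U^k$ and $\bar{V}^k = V^{k+1}-V^k$, this is exactly a sufficient decrease with $\gamma = \lambda/2$.

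The main obstacle is securing a source of strong convexity of $\dot{F}^k$ that is \emph{uniform} in $k$. The Tikhonov term handles this with constant $\lambda$, so one avoids arguing via the data-dependent $\Lambda_r^k,\Lambda_c^k$ (which would need boundedness of $\{(U^k,V^k)\}$ together with $\phi'>0$, the latter holding because $\phi$ is concave and strictly increasing). The other point to state carefully is the inner-solve assumption: if APG only drives $\mathcal{D}^k$ to within a small tolerance, the same conclusion holds with a slightly smaller $\gamma$ at the cost of accounting for the residual dual gap.
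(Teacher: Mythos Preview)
Your argument is correct, and in fact takes a cleaner route than the paper's. The paper proves sufficient decrease via the data-dependent quadratic terms $\tfrac12\NM{\Lambda_r^k\bar U}{F}^2+\tfrac12\NM{\Lambda_c^k\bar V}{F}^2$: it first establishes boundedness of $\{(U^k,V^k)\}$, uses this together with Assumption~\ref{ass:phi} to get a uniform lower bound $c_2>0$ on $\phi'(|[U^k(V^k)^\top]_{ij}|)$, then invokes Assumption~\ref{ass:wht} (no zero row/column in $W$) to bound all diagonals of $\Lambda_r^k,\Lambda_c^k$ below by some $\sqrt{\alpha}>0$, and finally derives $\dot H(U^k,V^k)-\dot H(U^{k+1},V^{k+1})\ge\tfrac{\alpha}{2}(\NM{\bar U^k}{F}^2+\NM{\bar V^k}{F}^2)$ by a subgradient-multiplication argument. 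You instead exploit directly that the Tikhonov term $\tfrac{\lambda}{2}\NM{U^k+\bar U}{F}^2+\tfrac{\lambda}{2}\NM{V^k+\bar V}{F}^2$ makes $\dot F^k$ $\lambda$-strongly convex in $(\bar U,\bar V)$ \emph{uniformly in $k$}, yielding the explicit constant $\gamma=\lambda/2$. This is shorter, does not need Assumption~\ref{ass:wht} for the sufficient-decrease step, and avoids the slight circularity in the paper's presentation (where boundedness is asserted before the monotone decrease that justifies it). The paper's route, on the other hand, would still produce a positive $\gamma$ even if $\lambda=0$, provided boundedness can be secured some other way; your route genuinely requires $\lambda>0$, which is already needed anyway for the coercivity/boundedness argument. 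Your remark about the inner APG tolerance is also well placed; the paper implicitly assumes the subproblem is solved exactly.
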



\begin{theorem}
	\label{thm:critical}
	The limit points 
	of the sequence generated by Algorithm~\ref{alg:rmfnl}
	are critical points of \eqref{eq:mrmf}.
\end{theorem}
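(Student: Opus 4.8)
We sketch the argument, which runs the standard majorization--minimization convergence scheme but --- since $\phi$ is nonconvex --- uses the Clarke subdifferential $\partial_C$ in place of the convex subdifferential used for RMF-MM, the goal being to show every limit point $(U^*,V^*)$ satisfies $0\in\partial_C\dot{H}(U^*,V^*)$. First I would establish asymptotic regularity: $\phi$ is increasing on $[0,\infty)$ (Assumption~\ref{ass:phi}), so the loss terms are bounded below by $\phi(0)$ and the regularizer is nonnegative, hence $\dot{H}$ is bounded below; combined with the sufficient decrease of Proposition~\ref{pr:limit}, $\{\dot{H}(U^k,V^k)\}$ is non-increasing and convergent, and telescoping the decrease inequality gives $\sum_k\NM{(\bar{U}^k,\bar{V}^k)}{F}^2<\infty$, so $(\bar{U}^k,\bar{V}^k)\to(0,0)$. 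Boundedness of $\{(U^k,V^k)\}$ (Proposition~\ref{pr:limit}) yields a subsequence $(U^{k_j},V^{k_j})\to(U^*,V^*)$, and the vanishing of the increments forces $(U^{k_j+1},V^{k_j+1})\to(U^*,V^*)$ too.

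Next I would exploit optimality of the surrogate. Each $(\bar{U}^k,\bar{V}^k)$ minimizes the convex $\dot{F}^k$ (step~4 of Algorithm~\ref{alg:rmfnl}, via the dual of Proposition~\ref{pr:dual}), so $0\in\partial\dot{F}^k(\bar{U}^k,\bar{V}^k)$. Writing the $\ell_1$ part of $\dot{F}^k$ as $\sum_{i,j}\dot{W}^k_{ij}|a^k_{ij}(\bar{U},\bar{V})|$ with $a^k_{ij}(\bar{U},\bar{V})\equiv[M-U^k(V^k)^\top-\bar{U}(V^k)^\top-U^k\bar{V}^\top]_{ij}$ affine, the sum rule for convex subdifferentials produces scalar selections $\sigma^k_{ij}\in[-1,1]$ --- equal to $\sign{a^k_{ij}(\bar{U}^k,\bar{V}^k)}$ when that quantity is nonzero --- such that
\begin{equation*}
0=\sum_{i,j}\dot{W}^k_{ij}\,\sigma^k_{ij}\,\nabla a^k_{ij}+\big(\lambda U^{k+1}+(\Lambda_r^k)^2\bar{U}^k,\ \lambda V^{k+1}+(\Lambda_c^k)^2\bar{V}^k\big),
\end{equation*}
$\nabla a^k_{ij}$ being the constant joint gradient. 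Along the subsequence, boundedness of the iterates bounds the residuals $r_{ij}\equiv M_{ij}-[U^k(V^k)^\top]_{ij}$, so continuity of $\phi'$ gives $\dot{W}^{k_j}_{ij}\to W_{ij}\phi'(|r_{ij}(U^*,V^*)|)$ and bounds the entries of $(\Lambda_r^k)^2,(\Lambda_c^k)^2$, making $(\Lambda_r^{k_j})^2\bar{U}^{k_j}$ and $(\Lambda_c^{k_j})^2\bar{V}^{k_j}$ vanish; also $a^{k_j}_{ij}(\bar{U}^{k_j},\bar{V}^{k_j})\to r_{ij}(U^*,V^*)$, $\nabla a^{k_j}_{ij}\to\nabla r_{ij}(U^*,V^*)$, and, after a further subsequence, the finitely many bounded $\sigma^{k_j}_{ij}$ converge to $\sigma^*_{ij}\in[-1,1]$ with $\sigma^*_{ij}=\sign{r_{ij}(U^*,V^*)}$ whenever $r_{ij}(U^*,V^*)\ne0$. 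Taking the limit,
\begin{equation*}
0=\sum_{i,j}W_{ij}\,\phi'(|r_{ij}(U^*,V^*)|)\,\sigma^*_{ij}\,\nabla r_{ij}(U^*,V^*)+(\lambda U^*,\ \lambda V^*).
\end{equation*}

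It remains to recognise the right-hand side as a Clarke-stationary condition. Since $\phi$ is $C^1$ and strictly increasing, $\partial_C(\phi(|\cdot|))(s)=\{\phi'(|s|)\sign{s}\}$ for $s\ne0$ and $\phi'(0)[-1,1]$ for $s=0$, and $\phi(|\cdot|)$ is Clarke regular; composing with the smooth maps $r_{ij}$ and summing (regularity making the chain and sum rules exact, the quadratic part being smooth) gives $\partial_C\dot{H}(U^*,V^*)=(\lambda U^*,\lambda V^*)+\sum_{i,j}W_{ij}\,\partial_C(\phi(|\cdot|))(r_{ij}(U^*,V^*))\,\nabla r_{ij}(U^*,V^*)$. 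Since $\phi'(|r_{ij}(U^*,V^*)|)\sigma^*_{ij}\in\partial_C(\phi(|\cdot|))(r_{ij}(U^*,V^*))$ for each $(i,j)$ by the choice of $\sigma^*_{ij}$, the last display yields $0\in\partial_C\dot{H}(U^*,V^*)$, i.e.\ $(U^*,V^*)$ is a critical point of \eqref{eq:mrmf}.

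I expect the limit-passing (second paragraph) to be the main obstacle. Unlike RMF-MM one cannot work inside convex analysis, and the surrogate is not fixed --- both its coefficients $\dot{W}^k,\Lambda_r^k,\Lambda_c^k$ and its minimizer drift with $k$ --- so the argument needs joint continuity of the surrogate subdifferential in the iterate and the variable, the vanishing of the $\Lambda^k$-weighted increments in the limit, compactness of the set of sign selections, and exactness of the Clarke calculus rules, which rests on $\phi\in C^1$ with $\phi'>0$ so that every piece of $\dot{H}$ is Clarke regular.
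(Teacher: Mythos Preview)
Your proposal is correct and follows essentially the same approach as the paper: both use the sufficient decrease and boundedness from Proposition~\ref{pr:limit} to get $(\bar{U}^k,\bar{V}^k)\to(0,0)$, then pass the surrogate's optimality condition to the limit and identify it with the Clarke subdifferential of $\dot{H}$. The paper packages your third paragraph --- the identification of the surrogate's (convex) subdifferential with the Clarke subdifferential of $\dot{H}$ at the anchor point --- as a separate lemma (Proposition~\ref{pr:direct}: $\partial\dot{F}^k(0,0)=\partial^\circ\dot{H}^k(0,0)$) proved by direct computation rather than via Clarke regularity and calculus rules, which lets its main proof be a three-line limit argument; your version is the same computation carried out inline and more carefully.
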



\section{Experiments}

\label{sec:exp}

In this section, we compare the proposed RMFNL with state-of-the-art
MF algorithms.
Experiments are performed on a PC with Intel i7 CPU and 32GB RAM.  
All the codes are in Matlab,
with sparse matrix operations implemented in C++.
We use the nonconvex loss functions
of LSP, Geman and Laplace in
Table~\ref{tab:regdef} of Appendix~\ref{app:modmcp},
with $\theta=1$;
and fix $\lambda = 20/(m + n)$ in (\ref{eq:rmf})
as suggested in \cite{lin2017robust}.

\subsection{Synthetic Data}

\label{sec:expsyn}

We first perform experiments on synthetic data, which
is generated as $X = U V^{\top}$ with $U \in \R^{m \times 5}$, $V \in \R^{m \times 5}$,
and $m=\{250, 500, 1000\}$.
Elements of $U$ and $V$ are sampled i.i.d. from the standard normal distribution $\mathcal{N}(0, 1)$.
This is then corrupted to form $M = X + N + S$,
where 
$N$ is the noise matrix from $\mathcal{N}(0, 0.1)$, and
$S$ is a sparse matrix modeling outliers
with 5\% nonzero elements randomly sampled from $\{ \pm 5 \}$.
We randomly draw $10\log(m)/m\%$ of the elements from $M$ as observations, 
with half of them for training and the other half for validation.
The remaining unobserved elements are for testing.
Note that the 
larger the 
$m$,
the sparser 
is the observed matrix.

The iterate $(U^1, V^1)$ 
is initialized
as Gaussian random matrices,
and the iterative procedure is stopped when the relative change in objective values between successive iterations
is smaller than $10^{-4}$.
For the subproblems in RMF-MM and RMFNL,
iteration is stopped when the relative change in objective value is smaller than
$10^{-6}$ or when a maximum of $300$ iterations is used.
The rank $r$ is set to the ground truth (i.e., 5).
For performance evaluation, we follow \cite{lin2017robust} and use the (i) testing
root mean square error,
$\text{RMSE} 
= \sqrt{\smash[b]{\NM{\bar{W} \odot (X - \bar{U} \bar{V}^T)}{F}^2 / \nnz{\bar{W}}}}$,
where 
$\bar{W}$
is a binary matrix 
indicating positions of the testing elements;
and (ii) CPU time.
To reduce statistical variability, results are averaged over five repetitions.

\subsubsection{Solvers for Surrogate Optimization}
Here, we compare three solvers for
surrogate  optimization in each  
RMFNL iteration
(with the LSP loss and $m = 1000$):
(i) 
LADMPSAP
in RMF-MM;
(ii) 
APG(dense), which 
uses APG but without 
utilizing data sparsity;
and 
(iii) 
APG in Algorithm~\ref{alg:rmfnl}, 
which utilizes data sparsity as in Section \ref{sec:subsolve}.
The APG stepsize is determined by line-search,
and adaptive restart is used for further speedup \cite{nesterov2013gradient}.
Figure~\ref{fig:surrogate}
shows convergence in the first RMFNL iteration
(results for the other iterations are similar).
As can be seen,
LADMPSAP is the slowest w.r.t. the number of iterations, 
as its convergence rate is inferior to both variants of APG (whose rates are the same).
In terms of CPU time,
APG is the fastest as it can also utilize data sparsity.


\begin{figure}[ht]
\centering
\subfigure[Complexities of surrogate optimizers.]
{\includegraphics[width=0.40\columnwidth]{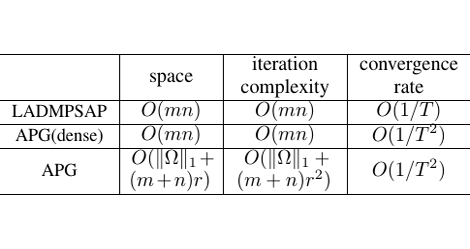}
\label{fig:compvar}}
\subfigure[Number of iterations. \label{fig:iter}]
{\includegraphics[width=0.28\columnwidth]{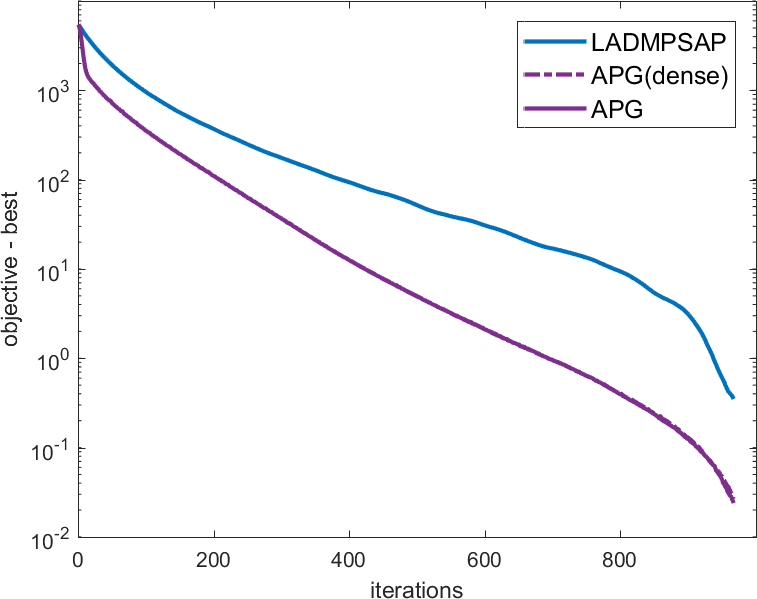}}
\subfigure[CPU time.]
{\includegraphics[width=0.28\columnwidth]{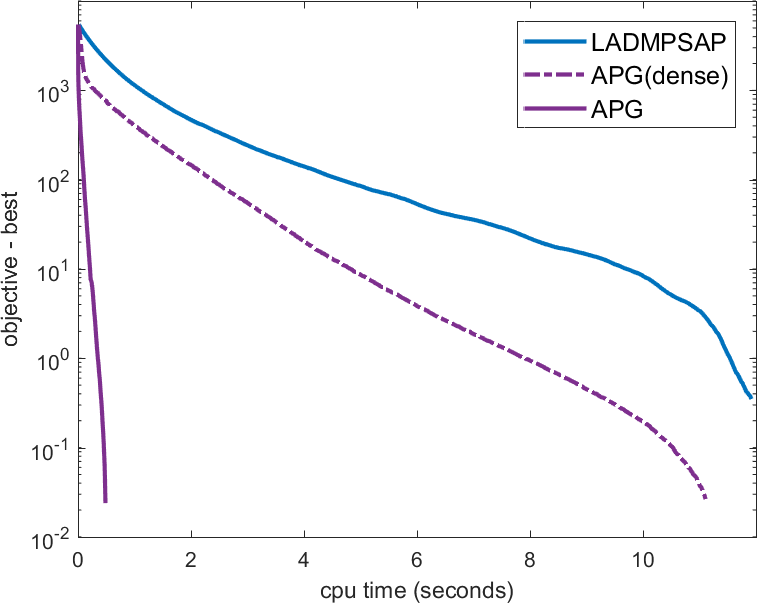}}

\caption{Convergence of the objective on the synthetic data set (with the LSP loss
and $m = 1000$).
Note that the curves for APG-dense and APG overlap in Figure~\ref{fig:iter}.}
\label{fig:surrogate}
\end{figure}

Table~\ref{tab:variant} 
shows performance of the whole RMFNL algorithm with 
different surrogate optimizers.\footnote{For all tables in the sequel, the best and comparable results according to the pairwise t-test with $95\%$ confidence are highlighted.}
As can be seen,
the various nonconvex losses 
(LSP, Geman and Laplace)
lead to similar RMSE's, as
has been similarly observed in
\cite{gong2013general,yao2018large}.
Moreover,
the different optimizers all obtain
the same RMSE.
In terms of speed,
APG is the fastest, then followed by
APG(dense), and
LADMPSAP 
is the slowest.
Hence, in the sequel, we will only use APG to
optimize
the surrogate.

\begin{table}[ht]
\centering
\caption{Performance of RMFNL with different surrogate optimizers.}
\scalebox{0.825}{
\begin{tabular}{c | c | c | c | c | c | c | c }
	\hline
      &            &   \multicolumn{2}{c|}{$m=250$ (nnz: 11.04\%)}   &   \multicolumn{2}{c|}{$m=500$ (nnz: 6.21\%)}    &   \multicolumn{2}{c}{$m=1000$ (nnz: 3.45\%)}    \\
loss   & solver & RMSE                     & CPU time             & RMSE                     & CPU time             & RMSE                    & CPU time              \\ \hline
      & LADMPSAP   & \textbf{0.110$\pm$0.004} & 17.0$\pm$1.4         & \textbf{0.072$\pm$0.001} & 195.7$\pm$34.7       & \textbf{0.45$\pm$0.007} & 950.8$\pm$138.8       \\ \cline{2-8}
 LSP   & APG(dense) & \textbf{0.110$\pm$0.004} & 12.1$\pm$0.6         & \textbf{0.073$\pm$0.001} & 114.4$\pm$18.8       & \textbf{0.45$\pm$0.007} & 490.1$\pm$91.9        \\ \cline{2-8}
      & APG        & \textbf{0.110$\pm$0.004} & \textbf{3.2$\pm$0.6} & \textbf{0.073$\pm$0.001} & \textbf{5.5$\pm$1.0} & \textbf{0.45$\pm$0.006} & \textbf{24.6$\pm$3.2} \\ \hline
      & LADMPSAP   & 0.115$\pm$0.014          & 20.4$\pm$0.8         & \textbf{0.074$\pm$0.006} & 231.0$\pm$36.9       & \textbf{0.45$\pm$0.007} & 950.8$\pm$138.8       \\ \cline{2-8}
 Geman  & APG(dense) & 0.115$\pm$0.011          & 13.9$\pm$1.6         & \textbf{0.073$\pm$0.002} & 146.9$\pm$24.8       & \textbf{0.45$\pm$0.007} & 490.1$\pm$91.9        \\ \cline{2-8}
      & APG        & 0.114$\pm$0.009          & \textbf{3.1$\pm$0.5} & \textbf{0.073$\pm$0.002} & \textbf{8.3$\pm$1.1} & \textbf{0.45$\pm$0.006} & \textbf{24.6$\pm$3.2} \\ \hline
      & LADMPSAP   & \textbf{0.110$\pm$0.004} & 17.1$\pm$1.5         & \textbf{0.072$\pm$0.001} & 203.4$\pm$22.7       & \textbf{0.45$\pm$0.007} & 950.8$\pm$138.8       \\ \cline{2-8}
	Laplace & APG(dense) & \textbf{0.110$\pm$0.004} & 12.1$\pm$2.1         & \textbf{0.073$\pm$0.003} & 120.9$\pm$28.9       & \textbf{0.45$\pm$0.007} & 490.1$\pm$91.9        \\ \cline{2-8}
      & APG        & \textbf{0.111$\pm$0.004} & \textbf{2.8$\pm$0.4} & \textbf{0.074$\pm$0.001} & \textbf{5.6$\pm$1.0} & \textbf{0.45$\pm$0.006} & \textbf{24.6$\pm$3.2} \\ \hline
\end{tabular}
}
\label{tab:variant}
\end{table}

%
%

\label{sec:otheralgs}
\subsubsection{Comparison with State-of-the-Art Matrix Factorization Algorithms}

Next,
we compare RMFNL with 
state-of-the-art 
MF and RMF algorithms.
The $\ell_2$-loss-based MF algorithms
that will be compared
include
alternating gradient descent  ({AltGrad}) \cite{mnih2008probabilistic},
Riemannian preconditioning (RP)
\cite{mishra2016riemannian}, 
scaled alternating steepest descent (ScaledASD)
\cite{tanner2016low},
alternative minimization for large scale matrix imputing (ALT-Impute)
\cite{hastie2015matrix}
and online massive dictionary learning (OMDL) \cite{mensch2016dictionary}.
The $\ell_1$-loss-based RMF algorithms being compared
include
RMF-MM
\cite{lin2017robust},
robust matrix completion (RMC)
\cite{cambier2016robust}
and 
Grassmannian robust adaptive subspace tracking algorithm (GRASTA)
\cite{he2012incremental}.
Codes are provided by the respective authors.
We do not compare
with AOPMC \cite{yan2013exact}, which has been shown to be 
slower than RMC \cite{cambier2016robust}.


As can be seen from
Table~\ref{tab:synall},
RMFNL produces much lower RMSE than the  MF/RMF algorithms,
and the RMSEs from different nonconvex losses are similar.
AltGrad, RP, ScaledASD, ALT-Impute and OMDL
are very fast because they use 
the simple $\ell_2$ loss.  However, their RMSEs are much higher than RMFNL and RMF algorithms.
	A more detailed convergence comparison is shown in Figure~\ref{fig:syncompall}.
As can be seen, 
RMF-MM is the slowest.
RMFNL with different nonconvex losses have similar convergence
behavior, and they all converge 
to a lower testing RMSE
much faster than the others.

\begin{table}[ht]
\centering
\caption{Performance of the various matrix factorization algorithms on synthetic data.}
\scalebox{0.825}{
\begin{tabular}{ c| c | c | c | c | c | c | c }
	\hline
	         &            &   \multicolumn{2}{c|}{$m=250$ (nnz: 11.04\%)}   &   \multicolumn{2}{c|}{$m=500$ (nnz: 6.21\%)}    &   \multicolumn{2}{c}{$m=1000$ (nnz: 3.45\%)}    \\
	loss     & algorithm  & RMSE                     & CPU time             & RMSE                     & CPU time             & RMSE                     & CPU time             \\ \hline
	$\ell_2$ & AltGrad    & 1.062$\pm$0.040          & 1.0$\pm$0.6          & 0.950$\pm$0.005          & 1.8$\pm$0.3          & 0.853$\pm$0.010          & 6.0$\pm$4.2          \\ \cline{2-8}
	         & RP         & 1.048$\pm$0.071          & \textbf{0.1$\pm$0.1} & 0.953$\pm$0.012          & 0.4$\pm$0.2          & 0.848$\pm$0.009          & 1.1$\pm$0.1          \\ \cline{2-8}
	         & ScaledASD  & 1.042$\pm$0.066          & 0.2$\pm$0.1          & 0.950$\pm$0.009          & 0.4$\pm$0.3          & 0.847$\pm$0.009          & 1.2$\pm$0.5          \\ \cline{2-8}
	         & ALT-Impute & 1.030$\pm$0.060          & 0.2$\pm$0.1          & 0.937$\pm$0.010          & 0.3$\pm$0.1          & 0.838$\pm$0.009          & 1.0$\pm$0.2          \\ \cline{2-8}
	         & OMDL       & 1.089$\pm$0.055          & \textbf{0.1$\pm$0.1} & 0.945$\pm$0.018          & \textbf{0.2$\pm$0.1} & 0.847$\pm$0.009          & \textbf{0.5$\pm$0.2} \\ \hline
	$\ell_1$ & GRASTA     & 0.338$\pm$0.033          & 1.5$\pm$0.1          & 0.306$\pm$0.002          & 2.9$\pm$0.3          & 0.244$\pm$0.009          & 6.1$\pm$0.4          \\ \cline{2-8}
	         & RMC        & 0.226$\pm$0.040          & 2.8$\pm$1.0          & 0.201$\pm$0.001          & 2.7$\pm$0.5          & 0.195$\pm$0.006          & 4.2$\pm$2.5          \\ \cline{2-8}
	         & RMF-MM     & 0.194$\pm$0.032          & 13.4$\pm$0.6         & 0.145$\pm$0.009          & 154.9$\pm$12.5       & 0.122$\pm$0.004          & 827.7$\pm$116.3      \\ \hline
	LSP      & RMFNL      & \textbf{0.110$\pm$0.004} & 3.2$\pm$0.6          & \textbf{0.073$\pm$0.001} & 5.5$\pm$1.0          & \textbf{0.047$\pm$0.002} & 14.0$\pm$5.2         \\ \hline
	Geman    & RMFNL      & 0.114$\pm$0.004          & 3.1$\pm$0.5          & \textbf{0.073$\pm$0.001} & 8.3$\pm$1.1          & \textbf{0.047$\pm$0.001} & 19.0$\pm$4.9         \\ \hline
	Laplace  & RMFNL      & \textbf{0.111$\pm$0.004} & 2.8$\pm$0.4          & \textbf{0.074$\pm$0.001} & 5.6$\pm$1.0          & \textbf{0.047$\pm$0.002} & 15.9$\pm$6.1         \\ \hline
\end{tabular}
}
\label{tab:synall}
\end{table}

\begin{figure}[ht]
	\centering
	\subfigure[$m=250$.]
	{\includegraphics[width=0.325\columnwidth]{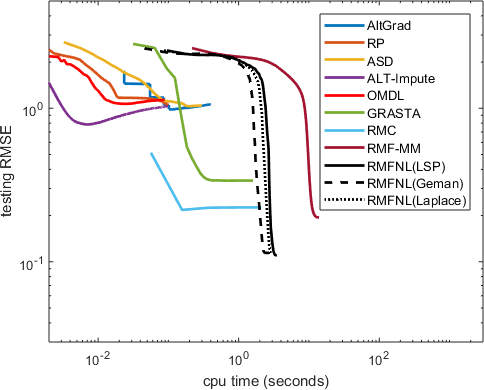}}
	\subfigure[$m=500$.]
	{\includegraphics[width=0.325\columnwidth]{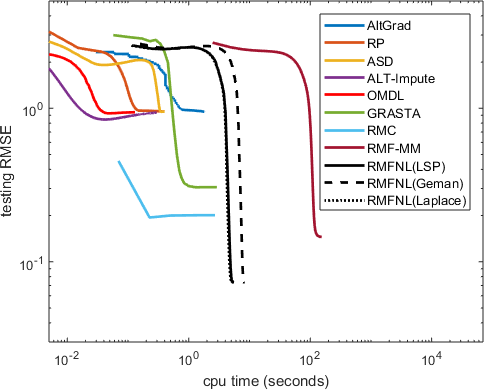}}
	\subfigure[$m=1000$.]
	{\includegraphics[width=0.325\columnwidth]{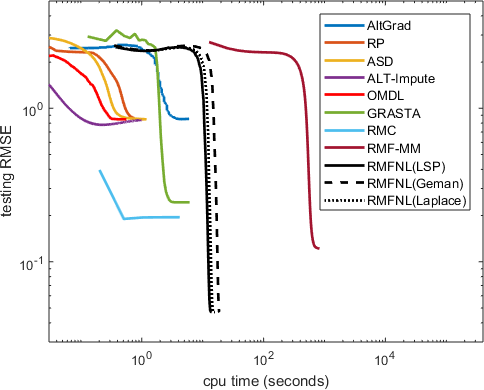}}
	\caption{Convergence of testing RMSE for the various algorithms on
	synthetic data.}
	\label{fig:syncompall}
\end{figure}


\subsection{Robust Collaborative Recommendation}

\label{sec:rec}

In a recommender system,
the love/hate attack changes the ratings
of selected items to the minimum (hate) or maximum (love) \cite{burke2015robust}.
The love/hate attack is very simple, 
but can significantly bias overall prediction.
As no love/hate attack 
data sets
are publicly available,
we follow
\cite{burke2015robust,mobasher2007toward} and manually add permutations.
Experiments are performed on the popular 
\textit{MovieLens}
recommender data sets:\footnote{We have also performed experiments on the larger
\textit{Netflix} and \textit{Yahoo} data sets.  Results are in
Appendix~\ref{app:largerec}.}
\textit{MovieLens-100K},
\textit{MovieLens-1M}, and
\textit{MovieLens-10M}
(Some statistics on these data sets are in Appendix~\ref{app:movielens}).
We
randomly select 3\% of the items from each data set.
For each selected item,
all its observed ratings 
are set 
to either the minimum or maximum with equal possibilities.
$50\%$ of the observed ratings 
are used
for training, $25\%$ for validation, and the rest for testing.
Algorithms in Section~\ref{sec:otheralgs} will be compared.
To reduce statistical variability, results are averaged over five repetitions.
As in Section~\ref{sec:expsyn},
the testing RMSE and CPU time 
are used
for performance evaluation.

Results 
are shown in Table~\ref{tab:mlens}, and
Figure~\ref{fig:movielens} shows convergence of the RMSE.
Again,
RMFNL with different nonconvex losses have similar performance
and achieve the lowest RMSE. 
The 
MF algorithms
are fast, but 
have high RMSEs. 
GRASTA is not stable,
with
large
RMSE 
and variance. 

\begin{table}[ht]
\centering
\caption{Performance on the \textit{MovieLens} data sets.  CPU time is in seconds.  
RMF-MM cannot converge in $10^4$ seconds
on the \textit{MovieLens-1M} and \textit{MovieLens-10M} data sets,
	and thus is not reported.
	}
\scalebox{0.825}{
\begin{tabular}{c |c | c | c | c| c | c | c}
	\hline
	         &            &  \multicolumn{2}{c|}{\textit{MovieLens-100K}}   &    \multicolumn{2}{c}{\textit{MovieLens-1M}}    &    \multicolumn{2}{c}{\textit{MovieLens-10M}}    \\ \cline{3-8}
	  loss   & algorithm  & RMSE                     & CPU time             & RMSE                     & CPU time             & RMSE                     & CPU time              \\ \hline
	$\ell_2$ & AltGrad    & 0.954$\pm$0.004          & 1.0$\pm$0.2          & 0.856$\pm$0.005          & 30.6$\pm$2.5         & 0.872$\pm$0.003          & 1130.4$\pm$9.6        \\ \cline{2-8}
	         & RP         & 0.968$\pm$0.008          & 0.2$\pm$0.1          & 0.867$\pm$0.002          & 4.4$\pm$0.4          & 0.948$\pm$0.011          & 199.9$\pm$39.0        \\ \cline{2-8}
	         & ScaledASD  & 0.951$\pm$0.004          & 0.3$\pm$0.1          & 0.878$\pm$0.003          & 8.7$\pm$0.2          & 0.884$\pm$0.001          & 230.2$\pm$7.7         \\ \cline{2-8}
	         & ALT-Impute & 0.942$\pm$0.021          & 0.2$\pm$0.1          & 0.859$\pm$0.001          & 10.7$\pm$0.2         & 0.872$\pm$0.001          & 198.9$\pm$2.6         \\ \cline{2-8}
	         & OMDL       & 0.958$\pm$0.003          & \textbf{0.1$\pm$0.1} & 0.873$\pm$0.008          & \textbf{2.6$\pm$0.5} & 0.881$\pm$0.003          & \textbf{63.4$\pm$4.2} \\ \hline
	$\ell_1$ & GRASTA     & 1.057$\pm$0.218          & 4.6$\pm$0.3          & 0.842$\pm$0.011          & 31.1$\pm$0.6         & 0.876$\pm$0.047          & 1304.3$\pm$18.0       \\ \cline{2-8}
	         & RMC        & 0.920$\pm$0.001          & 1.4$\pm$0.2          & 0.849$\pm$0.001          & 40.6$\pm$2.2         & 0.855$\pm$0.001          & 526.0$\pm$29.5        \\ \cline{2-8}
	         & RMF-MM     & 0.901$\pm$0.003          & 402.3$\pm$80.0       & ---                      & ---                  & ---                      & ---                   \\ \hline
	  LSP    & RMFNL      & \textbf{0.885$\pm$0.006} & 5.9$\pm$1.5          & \textbf{0.828$\pm$0.001} & 34.9$\pm$1.0         & \textbf{0.817$\pm$0.004} & 1508.2$\pm$69.1       \\ \hline
	 Geman   & RMFNL      & \textbf{0.885$\pm$0.005} & 6.6$\pm$1.2          & \textbf{0.829$\pm$0.005} & 35.3$\pm$0.3         & \textbf{0.817$\pm$0.004} & 1478.5$\pm$72.8       \\ \hline
	Laplace  & RMFNL      & \textbf{0.885$\pm$0.005} & 4.9$\pm$1.1          & \textbf{0.828$\pm$0.001} & 35.1$\pm$0.2         & \textbf{0.817$\pm$0.005} & 1513.4$\pm$12.2       \\ \hline
\end{tabular}
}
\label{tab:mlens}
\end{table}

\begin{figure}[ht]
\centering
\subfigure[\textit{MovieLens-100K}.]
{\includegraphics[width=0.325\columnwidth]{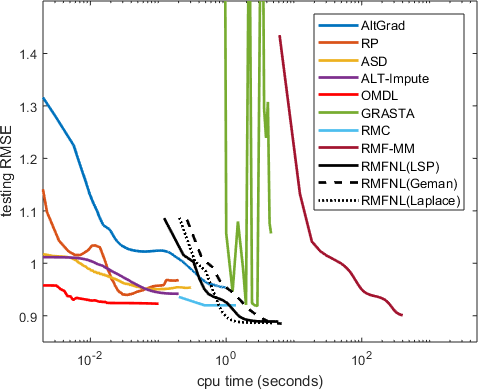}}
\subfigure[\textit{MovieLens-1M}.]
{\includegraphics[width=0.325\columnwidth]{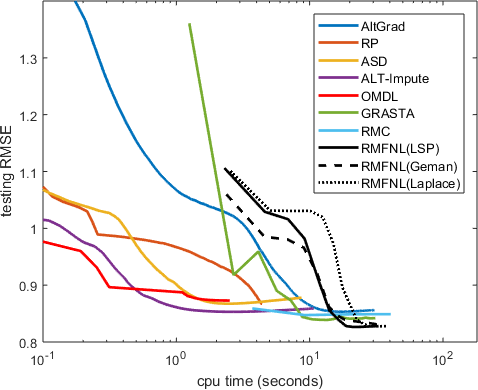}}
\subfigure[\textit{MovieLens-10M}.]
{\includegraphics[width=0.325\columnwidth]{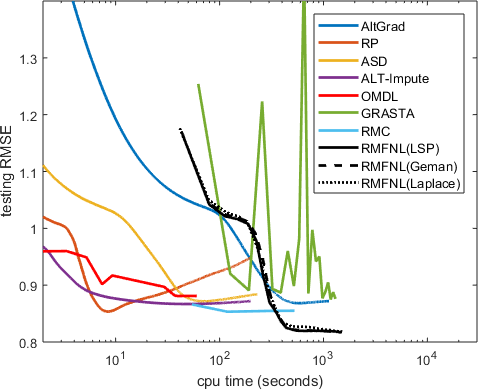}}
\caption{Convergence of testing RMSE on the recommendation data sets.  }
\label{fig:movielens}
\end{figure}


\subsection{Affine Rigid Structure-from-Motion (SfM)}

\label{sec:expSfM}

SfM
reconstructs the 3D scene from sparse feature points tracked in $m$ images of a moving camera
\cite{koenderink1991affine}.
Each feature point is projected to 
every image plane, and 
is thus
represented by a $2 m$-dimensional vector. With
$n$ feature points,
this leads to a 
$2m \times n$
matrix.
Often,
this matrix has missing data (e.g., some feature points may not be always visible)
and outliers (arising from feature mismatch).
We use the Oxford \textit{Dinosaur}
sequence,
which has $36$ images and $4,983$ feature points.
As in \cite{lin2017robust},
we extract three data subsets using feature points 
observed in 
at least $5,6$ and $7$ 
images. 
These are denoted ``D1" (with size 72$\times$932), ``D2"   (72$\times$557) and
``D3"      (72$\times$336).  The fully observed data matrix can be recovered by
rank-$4$ matrix factorization \cite{eriksson2010efficient}, and so we set $r=4$.





We compare RMFNL with RMF-MM and its variant (denoted RMF-MM(heuristic))
described in Section~4.2 of \cite{lin2017robust}.
In this variant,
the diagonal entries of $\Lambda_r$ and $\Lambda_c$ 
are initialized with small values and then 
increased
gradually.
It is claimed 
in \cite{lin2017robust}
that this leads to
faster
convergence. 
However, our experimental results show that this heuristic leads to 
more accurate,
but not faster, results. 
Moreover, the key pitfall of this variant is that 
Proposition~\ref{pr:rmf:surr} and
the convergence guarantee for RMF-MM no longer holds.

For performance evaluation,
as there is no ground-truth,
we follow \cite{lin2017robust} and
use 
the 
(i)
mean absolute error (MAE) 
$\NM{\bar{W} \odot (\bar{U} \bar{V}^{\top} - X)}{1} / \nnz{\bar{W}}$,
where $\bar{U}$ and $\bar{V}$ are outputs from the algorithm,
$X$ is the data matrix with observed positions indicated by the binary $\bar{W}$; and
(ii) CPU time. 
As the various nonconvex penalties have been shown to have similar
performance, we will only report the LSP here.

Results are shown in Table~\ref{tab:rand:dion}.
As can be seen,
RMF-MM(heuristic) obtains a lower MAE than RMF-MM,
but is still outperformed by RMFNL.
RMFNL is the fastest,
though the
speedup is not as significant as in previous sections.
This is because the \textit{Dinosaur} subsets are not very sparse (the percentages
of nonzero entries in
``D1", ``D2" and ``D3" are $17.9\%$, $20.5\%$ and $23.1\%$, respectively).

\begin{table}[ht]
\centering
\caption{Performance on the 
\textit{Dinosaur} 
data subsets.
	CPU time is in seconds.}
\scalebox{0.825}{
\begin{tabular}{c | c | c | c | c | c | c}
\hline
               &             \multicolumn{2}{c|}{D1}             &             \multicolumn{2}{c|}{D2}             &             \multicolumn{2}{c}{D3}              \\ \cline{2-7}
               & MAE                      & CPU time             & MAE                      & CPU time             & MAE                      & CPU time             \\ \hline
RMF-MM(heuristic) & 0.374$\pm$0.031          & 43.9$\pm$3.3         & 0.381$\pm$0.022          & 25.9$\pm$3.1         & 0.382$\pm$0.034          & 10.8$\pm$3.4         \\ \cline{1-7}
  RMF-MM       & 0.442$\pm$0.096          & 26.9$\pm$3.4         & 0.458$\pm$0.043          & 14.9$\pm$2.2         & 0.466$\pm$0.072          & 9.2$\pm$2.1          \\ \hline
   RMFNL       & \textbf{0.323$\pm$0.012} & \textbf{8.3$\pm$1.9} & \textbf{0.332$\pm$0.005} & \textbf{6.8$\pm$1.3} & \textbf{0.316$\pm$0.006} & \textbf{3.4$\pm$1.0} \\ \hline
\end{tabular}
}
\label{tab:rand:dion}
\end{table}


\section{Conclusion}

In this paper,
we 
improved the robustness of 
matrix factorization
by
using a nonconvex loss instead of the commonly used (convex) $\ell_1$ and
$\ell_2$-losses.
Second,
we improved its scalabililty
by exploiting data sparsity (which RMF-MM cannot) and using 
the accelerated proximal gradient algorithm  (which is faster than the commonly
used ADMM).
The space and iteration time complexities are
greatly reduced.
Theoretical analysis 
shows that the proposed RMFNL algorithm 
generates a critical point.
Extensive experiments 
on both synthetic and real-world data sets 
demonstrate that
RMFNL is 
more accurate and more scalable
than the state-of-the-art.

\subsection*{Acknowledgment}
The first author would like to specially thanks for Weiwei Tu and Yuqiang Chen from 4Paradigm Inc.

\cleardoublepage
{
\bibliographystyle{plain}
\bibliography{bib}
}


\appendix

\cleardoublepage


\section{Nonconvex Functions}
\label{app:modmcp}

\subsection{Modification of MCP and SCAD}

For the minimax concave penalty (MCP) \cite{zhang2010nearly}:
\begin{align*}
\phi(|\alpha|)
=
\begin{cases}
|\alpha| - \frac{\alpha^2}{2 \theta} & |\alpha| \le \theta \\
\frac{1}{2}\theta          & |\alpha| > \theta
\end{cases}.
\end{align*}
MCP does not meet Assumption~\ref{ass:phi}
as $\phi$ is not strictly increasing when $|\alpha| > \theta$.
To avoid this problem,
we can simply modify its $\phi$ as
$\tilde{\phi}(|\alpha|) = \phi(|\alpha|) + \delta |\alpha|$,
where $\delta > 0$ is a small constant
(Figure~\ref{fig:mcp}).
The smoothly clipped absolute deviation (SCAD) penalty 
\cite{fan2001variable} 
can be modified in the same way
(Figure~\ref{fig:scad}).

\subsection{Definitions}
\label{sec:defncvxf}

A formal definition of nonconvex functions can be used by RMFML is in Table~\ref{tab:regdef}.

\begin{table}[ht]
\centering
\caption{Example nonconvex regularizers ($\theta > 2$ for SCAD and $\theta > 0$ for others is a constant). Here,
	$\delta > 0$ is a small constant to ensure that
	the $\phi$'s for MCP and SCAD are
	strictly increasing.}
\scalebox{0.82}{
\begin{tabular}{c | c }
	\hline
	& $\phi(|\alpha|)$                  \\ \hline
	Geman penalty 
	& $\frac{|\alpha|}{\theta + |\alpha|}$   \\ \hline
	Laplace penalty 
	&  $1 - \exp\left( -\frac{|\alpha|}{\theta} \right)$                     \\ \hline
	log-sum-penalty (LSP) 
	& $\log\left( 1 + \frac{|\alpha|}{\theta} \right) $ \\ \hline
	minimax concave penalty (MCP)
	& $\begin{cases}
	(1 + \delta)|\alpha| - \frac{\alpha^2}{2 \theta} & \alpha \le  \theta \\
	\frac{1}{2}\theta ^2 + \delta |\alpha|      & \alpha >  \theta
	\end{cases}$                                                                                                                    
	\\ \hline
	smoothly clipped absolute deviation (SCAD) penalty  
	&   $\begin{cases}
	(1 + \delta)|\alpha| & |\alpha| \le 1 \\
	\frac{- \alpha^2 + 2\theta |\alpha| - 1}{2(\theta - 1)} + \delta|\alpha| & 1 < |\alpha| \le \theta \\
	\frac{(1+\theta)}{2} +  \delta|\alpha| & |\alpha| > \theta
	\end{cases}$                                                                                                                    
	\\ \hline
\end{tabular}
}
\label{tab:regdef}
\end{table}


\section{Details of the APG Algorithm}

\subsection{Computing the Gradient}
\label{app:compgrad}

The complete procedure for computing the gradient is shown in Algorithm~\ref{alg:compgrad}.

\begin{algorithm}[ht]
	\caption{Computing $\nabla\mathcal{D}^k(x)$ by exploiting sparsity.}
	\begin{algorithmic}[1]
		\STATE set $X_{i_t j_t} = x_t$ for all $(i_t, j_t) \in \Omega$;
		// i.e., $X = \mathcal{H}_{\Omega}(x)$
		\STATE $Q^k = A_r^k (X V^k) - \lambda (A_r^k U^k)$;
		\STATE obtain 
		$\hat{g}^k
		\in \R^{\nnz{W}}$
		with $\hat{g}^k_t = \sum_{q = 1}^r Q^k_{i_t q} V^k_{j_t q}$;
		\STATE $P^k = A_c^k ( X^{\top} U^k)  - \lambda (A_c^k V^k )$;
		\STATE obtain 
		$\breve{g}^k \in 
		\R^{\nnz{W}}$ with
		$\breve{g}^k_t = \sum_{q = 1}^r U^k_{i_t q} P^k_{j_t q}$; 
		\quad 
		// i.e., $\breve{g}^k = \invH{ U^k (P^k)^{\top} }$
		\RETURN $\hat{g}^k + \breve{g}^k - \invH{ M }$.
	\end{algorithmic}
	\label{alg:compgrad}
\end{algorithm} 

\subsection{Computing the  Objective}
\label{app:compobj}

By the definition of $\mathcal{H}_{\Omega}(x)$,
we construct a sparse matrix $X=\mathcal{H}_{\Omega}(x)$.
We then compute the first term in \eqref{eq:dual} as 
$\frac{1}{2} \NM{P^k \sqrt{ A_r^k }}{F}^2$
where
$P^k = X V^k - \lambda U^k$.
Note that $X$ is sparse with $O(\nnz{W})$ nonzero elements
and $A_r^k$ is a diagonal,
the computation of the first term in \eqref{eq:dual}
takes $O( \nnz{W} r + m r ) $ time,
where $r$ is the number of columns in $U^k$.
Let $y = \invH{M}$.
The second term 
in \eqref{eq:dual} 
can then be computed as $\sum_{i = 1}^{\nnz{W}} x_i y_i $,
which takes $O( \nnz{W} )$ time.
For the last term
in \eqref{eq:dual},
it can be computed similarly as the first term using $O( \nnz{W} r + n r )$ time.
Moreover,
we can see that only $O( \nnz{W} + (m + n)r )$ space is needed.

The whole procedure 
for computing the  objective
is shown in Algorithm~\ref{alg:compobj}.
It  takes 
$O( \nnz{W} + (m + n)r )$ space and
$O( \nnz{W} r + (m + n)r )$ time in total.

\begin{algorithm}[H]
\caption{Computing $\mathcal{D}^k(x)$ by exploiting sparsity.}
\begin{algorithmic}[1]
	\STATE set $X_{i_t j_t} = x_t$ for all $(i_t, j_t) \in \Omega$;
	\STATE $a_1 = \frac{1}{2} \NM{ \sqrt{ A_r^k } P^k }{F}^2$ where $P^k = X V^k - \lambda U^k$;
	
	\STATE $a_2 = \frac{1}{2} \NM{ \sqrt{ A_c^k } Q^k }{F}^2$ where $Q^k = X^{\top} U^k - \lambda V^k$;
	
	\STATE $a_3 = \sum_{i = 1}^{\nnz{W}} x_i y_i $ where $y = \invH{M}$;

	\RETURN $a_1 + a_2 + a_3$.
\end{algorithmic}
\label{alg:compobj}
\end{algorithm} 

\subsection{Computing the Proximal Step}
\label{app:prox}

For the proximal step  with \eqref{eq:dual},
a closed-form solution 
can be obtained by the following Lemma.

\begin{lemma}[\cite{boyd2004convex}]
	\label{lem:prox}
For any given $z$,
	$x^* = \arg\min_{x \in \mathcal{W}^k} \frac{1}{2} \NM{x - z}{F}^2
	= [\sign{z_i} \min( |z_{i}|, (\dot{w}^k_{i})^{-1} )]$.
\end{lemma}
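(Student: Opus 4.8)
The plan is to exploit the fact that both the objective and the constraint set in this projection problem are fully separable across the $\nnz{W}$ coordinates, so the problem decouples into $\nnz{W}$ independent scalar projections, each of which is the elementary projection of a real number onto a symmetric interval. Since this is a standard convex-optimization fact (hence the attribution to \cite{boyd2004convex}), the work is entirely in making the separation explicit and tracking the sign conventions.

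First I would write the objective as $\frac{1}{2}\NM{x - z}{F}^2 = \frac{1}{2}\sum_{i=1}^{\nnz{W}}(x_i - z_i)^2$, and observe that membership $x \in \mathcal{W}^k$ is, by its very definition, equivalent to the coordinatewise box constraints $|x_i| \le (\dot{w}^k_i)^{-1}$ with no coupling between distinct coordinates. Consequently the minimization decomposes as $\min_{x \in \mathcal{W}^k}\frac{1}{2}\NM{x-z}{F}^2 = \sum_{i=1}^{\nnz{W}} \bigl( \min_{|x_i| \le (\dot{w}^k_i)^{-1}} \frac{1}{2}(x_i - z_i)^2 \bigr)$, and it suffices to solve each scalar subproblem independently and assemble the optimal vector from the scalar optimizers.

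Next I would solve the generic scalar problem $\min_{|t| \le c}\frac{1}{2}(t - s)^2$ with $c = (\dot{w}^k_i)^{-1} \ge 0$ and $s = z_i$. The function $\frac{1}{2}(t-s)^2$ is strictly convex with unconstrained minimizer $t = s$, so the constrained minimizer is the Euclidean projection of $s$ onto $[-c, c]$, namely $t^* = \max(-c,\, \min(s, c))$. A short case split on the sign of $s$ then identifies this clipping with the claimed form: if $|s| \le c$ the projection returns $s = \sign{s}\,|s|$, while if $|s| > c$ it returns $\sign{s}\,c$; in both cases $t^* = \sign{s}\min(|s|, c)$. Substituting $s = z_i$ and $c = (\dot{w}^k_i)^{-1}$ for each $i$ and stacking the results yields $x^*_i = \sign{z_i}\min(|z_i|, (\dot{w}^k_i)^{-1})$, which is exactly the stated formula. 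The only point requiring any care, rather than a genuine obstacle, is the sign/case bookkeeping in the scalar step and the verification that $\mathcal{W}^k$ is a product of intervals; everything else is the textbook projection onto an interval.
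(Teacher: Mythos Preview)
Your argument is correct and is exactly the standard derivation: separability of both the objective and the box constraint reduces the problem to coordinatewise projection onto $[-(\dot{w}^k_i)^{-1},(\dot{w}^k_i)^{-1}]$, which is the clipping formula you wrote. The paper does not supply its own proof of this lemma at all; it simply states the result and attributes it to \cite{boyd2004convex}, so there is nothing further to compare.
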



\section{Clarke Subdifferential}
\label{app:clarke}

We first introduce two definitions from \cite{clarke1990optimization}.

\begin{definition}[Clarke subdifferential]
	\label{def:clarke}
	Let $f : \R^{m \times n} \rightarrow \R$ be a 
	\footnote{A function is called locally Lipschitz continuous if for every $X$ in its domain
		there exists a neighborhood $\mathcal{U}$ of $X$ such that $f$ restricted to $\mathcal{U}$ is Lipschitz continuous.}
	locally Lipschitz
	function.
	The {\em Clarke generalized directional derivative} of $f$ at $X$ in the direction of $V$ is:
	\begin{align*}
	f^{\circ}(X, V) 
	\equiv 
	\limsup_{Y \rightarrow X, \lambda \rightarrow 0}
	\frac{1}{\lambda}
	[  f(Y + \lambda V) - f(Y) ].
	\end{align*}
	The 
	{\em Clarke subdifferential}
	of $f$ at $X$ is 
	\begin{align*}
	\partial^{\circ} f(X)
	\equiv
	\lbrace 
	\xi : f^{\circ}(X, V) \ge \Tr{\xi^{\top} V}, 
	\forall \, V \in \R^{m \times n}
	\rbrace.
	\end{align*}
\end{definition}

Note that $f$ in Definition~\ref{def:clarke} can be neither 
convex nor smooth.

\begin{definition}[Critical point]
	\label{def:crti}
	A point $X$ is a critical point of $f$ if it satisfies $0 \in \partial^{\circ} f(X)$.
\end{definition}

\section{Proofs}
\label{app:proof}

\subsection{Preliminaries}

In the section,
we first introduce some Lemmas that will be used later in the proof.  

For a continuous $f$, $\partial^{\circ} {f}$ is the Clarke subdifferential.
The critical points for problem \eqref{eq:mrmf} are defined in the following Lemma.

\begin{lemma} \label{lem:ctri}
	Let $C = M - U V^{\top}$.
	$(U, V)$ is a critical point of \eqref{eq:mrmf} if 
	$0 \in  (W \odot S) V + \lambda U
	\;\text{and}\;
	0 \in (W \odot S)^{\top} U + \lambda V$, 
	where $S_{ij} = \sign{C_{ij}} \phi'( | C_{ij} |  )$ if $C_{ij} \neq
	0$,
	and $S_{ij} \in [ - \phi'(0), \phi'(0) ]$ otherwise.	
\end{lemma}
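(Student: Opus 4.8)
The plan is to unwind Definition~\ref{def:crti}: $(U,V)$ is a critical point of $\dot{H}$ in \eqref{eq:mrmf} precisely when $0 \in \partial^{\circ}\dot{H}(U,V)$, so the whole task reduces to computing this Clarke subdifferential and reading off its $U$- and $V$-blocks. I would first decompose $\dot{H} = g\circ\Psi + R$, where $\Psi(U,V) = M - U V^{\top}$ is the smooth bilinear residual map, $g(C) = \sum_{i,j} W_{ij}\,\phi(|C_{ij}|)$ collects all the nonsmoothness, and $R(U,V) = \frac{\lambda}{2}(\NM{U}{F}^2 + \NM{V}{F}^2)$ is the smooth regularizer. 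Since $R$ is continuously differentiable, adding it contributes exactly its gradient $(\lambda U,\lambda V)$ to the subdifferential, so the core object is $\partial^{\circ}(g\circ\Psi)(U,V)$.

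Next I would identify the matrix $S$ by computing the Clarke subdifferential of the scalar building block $c \mapsto \phi(|c|)$, which is exactly where the two cases of the statement come from. For $c \neq 0$ this map is smooth with derivative $\sign{c}\,\phi'(|c|)$; at $c = 0$ I would compute directly from Definition~\ref{def:clarke} that the generalized directional derivative equals $\phi'(0)|v|$, whence $\partial^{\circ}[\phi(|\cdot|)](0) = [-\phi'(0),\phi'(0)]$. The upper bound uses the mean value theorem together with the $1$-Lipschitz continuity of $|\cdot|$ and continuity of $\phi'$ (Assumption~\ref{ass:phi}); the matching lower bound is obtained along the base point $y=0$. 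These two cases are exactly the definition of $S_{ij}$. Because $g$ is a coordinate-separable sum with nonnegative weights $W_{ij}$, its subdifferential factorizes entrywise into $\partial^{\circ} g(C) = \{\, W \odot S : S \text{ as in the statement}\,\}$.

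The step I expect to be the main obstacle, and the one that upgrades the chain rule from a mere inclusion to an identity, is establishing Clarke regularity of $g$ at $C$. The directional-derivative computation above already shows each summand is regular at $c=0$ (its one-sided directional derivative $\phi'(0)|v|$ agrees with the generalized one), and it is trivially regular where $c \neq 0$; regularity is preserved under nonnegative finite sums, so $g$ is regular at $C$. With $\Psi$ continuously differentiable and $g$ regular, Clarke's chain rule applies with equality: $\partial^{\circ}(g\circ\Psi)(U,V) = (D\Psi(U,V))^{*}\,\partial^{\circ} g(C)$. A short adjoint computation from $D\Psi(U,V)[\Delta U,\Delta V] = -\Delta U (V)^{\top} - U (\Delta V)^{\top}$ shows that a cotangent $T = W\odot S$ is sent to the pair $(-T V,\, -T^{\top}U)$.

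Finally I would assemble the pieces: adding the regularizer gradient $(\lambda U,\lambda V)$ to the adjoint image $\{(-TV,\,-T^{\top}U): T = W\odot S \in \partial^{\circ} g(C)\}$ produces $\partial^{\circ}\dot{H}(U,V)$, whose $U$- and $V$-blocks, required to contain $0$ for some admissible $S$, are exactly the two stationarity inclusions claimed (the overall sign of $S$ being pinned to the residual convention $C = M - U V^{\top}$). Thus $0 \in \partial^{\circ}\dot{H}(U,V)$ holds iff both inclusions hold, which is the assertion. The only genuinely delicate point is the regularity verification at $c=0$; the entrywise factorization of $\partial^{\circ}g$, the bilinear adjoint, and the additivity of the smooth regularizer are all routine once regularity is secured.
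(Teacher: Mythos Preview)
Your proposal is correct and follows the same route as the paper's own proof: identify the Clarke subdifferential of the scalar map $c\mapsto\phi(|c|)$, assemble $\partial^{\circ}\dot{H}$ from it, and invoke Definition~\ref{def:crti}. The paper is considerably terser---it simply quotes the scalar subdifferential from an external source (Proposition~5 of \cite{gong2015honor}) and then writes ``combining these, we obtain the Lemma''---so your explicit verification of Clarke regularity (needed for the chain rule to hold with equality rather than mere inclusion) and the adjoint computation fill in steps the paper leaves implicit.
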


\begin{proof}
For a nonconvex penalty function $\phi$ satisfying Assumption~\ref{ass:phi},
	from Proposition 5 in \cite{gong2015honor},
	its Clark subdifferential is 
	\begin{align}
	\begin{cases}
	\partial^{\circ} \phi(|\alpha|) = \sign{\alpha} \cdot \phi'(|\alpha|)
	& \text{if}\; \alpha \neq 0
	\\
	\partial^{\circ} \phi(|\alpha|)
	\in \left[ - \phi'(0), \phi'(0) \right]
	& \text{otherwise}
	\end{cases}.
	\label{eq:temp11}
	\end{align}
	By Definition~\ref{def:crti},
	if $(U, V)$ is a critical point of \eqref{eq:mrmf},
	it needs to satisfy
	\begin{align}
	(0, 0) \in \partial^{\circ} \dot{H}(U, V).
	\label{eq:temp12}
	\end{align}
	Combining \eqref{eq:temp11} and \eqref{eq:temp12},
	we obtain the Lemma.
\end{proof}

\begin{lemma}
	\label{lem:realW}
	Define the row sum $\ssum{\dot{W}_{(i, :)}^k} = \sum_{j = 1}^n \dot{W}^k_{ij}$, and
	the column sum
	$\ssum{\dot{W}_{(:, j)}^k} = \sum_{i = 1}^m \dot{W}^k_{ij}$.
	Then,
	$\NM{\dot{W}^k \odot (\bar{U} \bar{V}^{\top})}{1}
	\le \frac{1}{2}\NM{\Lambda_r^k \bar{U}}{F}^2
	+ \frac{1}{2}\NM{\Lambda_c^k \bar{V}}{F}^2$,
	where
	\begin{align*}
	\Lambda_r^k = \Diag{\sqrt{\smash[b]{\ssum{\dot{W}_{(1, :)}^k}}}, \dots ,
		\sqrt{\smash[b]{\ssum{\dot{W}_{(m, :)}^k}}}},
	\end{align*}
	and
	\begin{align*}
	\Lambda_c^k  = \Diag{\sqrt{\smash[b]{\ssum{\dot{W}_{(:, 1)}^k}}}, \dots, \sqrt{\smash[b]{\ssum{\dot{W}_{(:, n)}^k}} }}.
	\end{align*}
	Equality holds iff $( \bar{U}, \bar{V} )  = (0, 0)$.
\end{lemma}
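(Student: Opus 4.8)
The plan is to reduce the claimed inequality to a collection of elementary scalar bounds, one per nonzero entry of $\dot{W}^k$, by expanding everything in terms of individual matrix entries. First I would write out the left-hand side entrywise: since $\dot{W}^k$ is entrywise nonnegative (each entry is $A^k_{ij} W_{ij}$ with $A^k_{ij} = \phi'(\cdot) \ge 0$ by Assumption~\ref{ass:phi}), we have $\NM{\dot{W}^k \odot (\bar{U}\bar{V}^\top)}{1} = \sum_{i,j} \dot{W}^k_{ij} \, |[\bar{U}\bar{V}^\top]_{ij}| = \sum_{i,j} \dot{W}^k_{ij} \, |\langle \bar{U}_{(i,:)}, \bar{V}_{(j,:)}\rangle|$, where $\bar{U}_{(i,:)}$ and $\bar{V}_{(j,:)}$ are the $i$th and $j$th rows of $\bar{U}$ and $\bar{V}$. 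Applying Cauchy--Schwarz on the inner product and then the AM--GM inequality $|a||b| \le \tfrac12 a^2 + \tfrac12 b^2$ (or more directly $|ab|\le \tfrac12(a^2+b^2)$ applied after introducing a balancing scalar), I get $\dot{W}^k_{ij}|\langle \bar{U}_{(i,:)},\bar{V}_{(j,:)}\rangle| \le \dot{W}^k_{ij}\SQ{\bar{U}_{(i,:)}}\SQ{\bar{V}_{(j,:)}} \le \tfrac12 \dot{W}^k_{ij}\SQ{\bar{U}_{(i,:)}}^2 + \tfrac12 \dot{W}^k_{ij}\SQ{\bar{V}_{(j,:)}}^2$.

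Next I would sum this bound over all $i,j$ and regroup the two halves. The first half becomes $\tfrac12 \sum_i \SQ{\bar{U}_{(i,:)}}^2 \big(\sum_j \dot{W}^k_{ij}\big) = \tfrac12 \sum_i \ssum{\dot{W}_{(i,:)}^k}\,\SQ{\bar{U}_{(i,:)}}^2$, which is exactly $\tfrac12\NM{\Lambda_r^k \bar{U}}{F}^2$ by the definition of $\Lambda_r^k$ (the $i$th diagonal entry squared is the $i$th row sum, so left-multiplying $\bar{U}$ by $\Lambda_r^k$ scales the $i$th row by $\sqrt{\ssum{\dot{W}_{(i,:)}^k}}$). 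Symmetrically, the second half equals $\tfrac12 \sum_j \ssum{\dot{W}_{(:,j)}^k}\,\SQ{\bar{V}_{(j,:)}}^2 = \tfrac12\NM{\Lambda_c^k \bar{V}}{F}^2$. Combining gives the stated inequality.

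For the equality characterization, the ``if'' direction ($(\bar{U},\bar{V})=(0,0)$) is immediate since both sides vanish. For ``only if,'' I would trace back through the two inequalities used: equality in AM--GM forces $\SQ{\bar{U}_{(i,:)}} = \SQ{\bar{V}_{(j,:)}}$ for every pair $(i,j)$ with $\dot{W}^k_{ij} > 0$, and equality in Cauchy--Schwarz forces $\bar{U}_{(i,:)}$ and $\bar{V}_{(j,:)}$ to be parallel. Using Assumption~\ref{ass:wht} (no zero row or column of $W$, hence of $\dot{W}^k$ since $\phi' > 0$ on the relevant range — here I should note $\phi'(0)>0$ so $A^k_{ij}>0$ whenever $W_{ij}=1$), every row index $i$ participates in at least one such constraint and every column index $j$ likewise; chaining the ``equal norm'' conditions across the connectivity pattern shows all nonzero rows of $\bar{U}$ and $\bar{V}$ have a common norm, and a short argument (e.g.\ summing the per-entry slack, which is $\tfrac12\dot{W}^k_{ij}(\SQ{\bar U_{(i,:)}}-\SQ{\bar V_{(j,:)}})^2$ plus the Cauchy--Schwarz defect, and noting it must be zero termwise) forces that common norm to be zero. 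The main obstacle I anticipate is this last step: making the ``only if'' direction fully rigorous requires care with the bipartite graph induced by the support of $\dot{W}^k$ and with the degenerate case $\phi'(0)$, rather than any hard inequality — the inequality itself is a routine Cauchy--Schwarz/AM--GM argument essentially identical to the one behind Proposition~\ref{pr:rmf:surr} in \cite{lin2017robust}, just with a real-valued weight matrix in place of a binary one.
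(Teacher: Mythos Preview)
Your argument for the inequality is essentially identical to the paper's: expand $\NM{\dot{W}^k\odot(\bar U\bar V^\top)}{1}$ entrywise as $\sum_{i,j}\dot W^k_{ij}\,|\underline u_i^\top\underline v_j|$, apply Cauchy--Schwarz followed by $ab\le\tfrac12(a^2+b^2)$, and regroup into row and column sums. The paper does exactly this and nothing more.

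There is, however, a genuine gap in your treatment of the ``only if'' direction of the equality claim --- and it is not one that more care will close. Your final step, ``forces that common norm to be zero,'' does not follow: tracing back through Cauchy--Schwarz and AM--GM only gives that $\bar U_{(i,:)}$ and $\bar V_{(j,:)}$ are parallel with equal norms for every $(i,j)$ in the support of $\dot W^k$, which does \emph{not} force them to vanish. Concretely, take any nonzero $u\in\R^r$ and set every row of $\bar U$ and every row of $\bar V$ equal to $u$; then $[\bar U\bar V^\top]_{ij}=\|u\|_2^2$ for all $i,j$, the left-hand side equals $\|u\|_2^2\sum_{i,j}\dot W^k_{ij}$, and each term on the right also equals $\tfrac12\|u\|_2^2\sum_{i,j}\dot W^k_{ij}$, so equality holds with $(\bar U,\bar V)\neq(0,0)$. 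The ``iff'' as stated is therefore false; the paper itself simply asserts the equality condition without proof, so your proposal is no worse than the original on this point, but you should be aware that the claimed characterization cannot be salvaged as written.
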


\begin{proof}
	First, we have
	\begin{align}
	\NM{\dot{W}^k \odot (\bar{U} \bar{V}^{\top})}{1}
	& = 
	\left\| 
	\dot{W}^k \odot 
	\begin{bmatrix}
	\underline{u}_1^{\top}  \underline{v}_1 
	\cdots  \underline{u}_1^{\top}  \underline{v}_n
	\\
	\cdots
	\\
	\underline{u}_m^{\top}  \underline{v}_1 
	\cdots  \underline{u}_m^{\top}  \underline{v}_n
	\end{bmatrix}
	\right\|_1
	\notag
	\\ 
	& =
	\sum_{i = 1}^m \sum_{i = 1}^n
	\dot{W}^k_{ij} \left| \underline{u}_i^{\top} \underline{v}_j \right|.
	\label{eq:temp1}
	\end{align}
	where $\underline{u}_i$ is $i$th row in $\bar{U}$ (similar, for $\underline{v}_j$ in $\bar{V}$).
	Then,
	from Cauchy inequality, 
	we have 
	\begin{align*}
	\left| \underline{u}_i^{\top} \underline{v}_j \right|
	\le \NM{\underline{u}_i}{2} \NM{\underline{v}_j}{2}
	\le \frac{1}{2} \left( \NM{\underline{u}_i}{2}^2 + \NM{\underline{v}_j}{2}^2 \right).
	\end{align*}
	Together with \eqref{eq:temp1}, we have
	\begin{align*}
	\NM{\dot{W}^k \odot (\bar{U} \bar{V}^{\top})}{1}
	\le \frac{1}{2} \sum_{i = 1}^m  \sum_{j = 1}^n \dot{W}^k_{ij} 
	\left( \NM{\underline{u}_i}{2}^2 + \NM{\underline{v}_j}{2}^2 \right) 
	= \frac{1}{2}\NM{\Lambda_r \bar{U}}{F}^2
	+ \frac{1}{2}\NM{\Lambda_c \bar{V}}{F}^2,
	\end{align*}
	and the equality holds only when $(\bar{U}, \bar{V}) = (\mathbf{0}, \mathbf{0})$.
\end{proof}


\subsection{Proposition~\ref{pr:bndphi}}
\label{app:pr:bndphi}

\begin{proof}
	Note that $\phi(x)$ is concave on $x \ge 0$.
	For any $y \ge 0$, we have
	\begin{align*}
	\phi(y) \le \phi(x) + (y - x)\phi'(x).
	\end{align*}
	Let $y = |\beta|$ and $x = |\alpha|$.
	We obtain
	\begin{align*}
	\phi(|\beta|) \le \phi(|\alpha|) + (|\beta| - |\alpha|)\phi'(|\alpha|).
	\end{align*}
	As $\phi$ is concave and strictly increasing on $\R^+$,
	equality holds iff $\beta = \pm \alpha$.
\end{proof}


\subsection{Corollary~\ref{cor:loss}}
\label{app:cor:loss}

\begin{proof}
This Corollary can be easily obtained 
(i) using Proposition~\ref{pr:bndphi} 
on the nonconvex loss in \eqref{eq:mrmf};
and (ii) $U = U^k + \bar{U}$ and $V = V^k + \bar{V}$.
\end{proof}


\subsection{Proposition~\ref{pr:bndext}}
\label{app:pr:bndext}

\begin{proof}
	From the Cauchy inequality,
	we have
\begin{eqnarray}
\lefteqn{\NM{\dot{W}^k \odot
( M - (U^k + \bar{U}) \left( V^k + \bar{V} \right)^{\top} )}{1}}
\label{eq:temp2}
\\
& \le &
\NM{\dot{W}^k  \odot  ( M  -  U^k (V^k)^{\top}  -  \bar{U} (V^k)^{\top} 
-  U^k \bar{V}^{\top} )}{1}
+  \NM{\dot{W}^k  \odot  ( \bar{U} \bar{V}^{\top}) }{1}.
\notag
\end{eqnarray}
	For the last term,
	using Lemma~\ref{lem:realW}, we have
	\begin{align}
	\NM{\dot{W}^k \odot ( \bar{U} \bar{V}^{\top})}{1}
	\le \frac{1}{2}
	\left( \NM{\Lambda_r^k \bar{U}}{F}^2
	+ \NM{\Lambda_c^k \bar{V}}{F}^2 \right).
	\label{eq:temp3}
	\end{align}
	Combining \eqref{eq:temp2} and \eqref{eq:temp3},
	we have 
	\begin{align}
	\sum_{i = 1}^m
	\sum_{j = 1}^n
	\dot{W}^k_{ij} 
	\phi\left( \left| M_{ij} - [ U V^{\top} ]_{ij} \right|  \right)
	\le 
	& \NM{\dot{W}^k \odot
		( M - (U^k + \bar{U}) ( V^k + \bar{V} )^{\top} )}{1}
	\notag
	\\
	& +
	\frac{1}{2}
	\left( \NM{\Lambda_r^k \bar{U}}{F}^2
	+ \NM{\Lambda_c^k \bar{V}}{F}^2 \right) + b^k.
	\label{eq:temp4}
	\end{align}
	Adding $ \frac{\lambda}{2} \NM{U^k + \bar{U}}{F}^2
	+ \frac{\lambda}{2} \NM{V^k + \bar{V}}{F}^2
	$
	to both side of \eqref{eq:temp4},
	we obtain the proposition.
	
	\noindent
	Besides,
	from Lemma~\ref{lem:realW},
	the equality in the proposition holds
	only when $(\bar{U}, \bar{V}) = (0, 0)$.
\end{proof}


\subsection{Proposition~\ref{pr:dual}}
\label{app:pr:dual}

\begin{proof}
Using the fact that $\NM{X}{1} = \max_{\NM{Y}{\infty} \le 1} \Tr{X^{\top} Y}$
\cite{boyd2004convex},
where $\NM{Y}{\infty} = \max_{i,j} | Y_{ij} |$ is the $\ell_{\infty}$-norm,
$\mathcal{D}^k(x)$ can be rewritten as
\begin{align*}
\max_{ x \in \mathcal{W}^k } \min_{\bar{U}, \bar{V}}
\mathcal{P}(x, \bar{U}, \bar{V}),
\end{align*}
where 
\begin{align}
\!\!\!\!
\mathcal{P}(x, \bar{U}, \bar{V})
\equiv \; &
\Tr{\mathcal{H}_{\Omega}(x)^{\top} (M  -  \bar{U} (V^k)^{\top}  -  U^k \bar{V}^{\top})}
\label{eq:primaldual}
\\
& +  \frac{\lambda}{2}\NM{U^k + \bar{U}}{F}^2 
+ \frac{1}{2}\NM{\Lambda_r^k  \bar{U}}{F}^2 
+ \frac{\lambda}{2}\NM{V^k + \bar{V}}{F}^2 
+ \frac{1}{2}\NM{\Lambda_c^k  \bar{V}}{F}^2.
\notag
\end{align}
As \eqref{eq:primaldual} is an unconstrained, smooth and convex problem on $\bar{U}$,
the optimal solution is obtained when
$\nabla_{\bar{U}} \mathcal{P}(X, \bar{U}, \bar{V}) = 0$.
Then,
\begin{align}
\bar{U} 
=  A_r^k 
( \mathcal{H}_{\Omega}(x) V^k - \lambda U^k ).
\label{eq:temp18}
\end{align}
Similarly,
we obtain 
\begin{align}
\bar{V} 
=  A_c^k 
( \mathcal{H}_{\Omega}(x)^{\top}  U^k  - \lambda V^k ).
\label{eq:temp19}
\end{align}
Substituting \eqref{eq:temp18} and \eqref{eq:temp19} back into \eqref{eq:primaldual},
we obtain
$\mathcal{D}^k(X)$ in the proposition.
\end{proof}


%


\subsection{Proposition~\ref{pr:limit}}
\label{app:pr:limit}

First,
Proposition~\ref{pr:limit} 
can be elaborated as fololows.

\begin{proposition}
For Algorithm~\ref{alg:rmfnl},
\begin{itemize}
\item[(i).]
$\lbrace ( U^k, V^k ) \rbrace$ is bounded. 

\item[(ii).]
$\{ (U^k, V^k) \}$ has a sufficient decrease on $\dot{H}$, i.e.,
$\dot{H}( U^k, V^k ) - \dot{H}( U^{k + 1}, V^{k + 1} )
\ge \gamma \NM{ U^{k + 1} - U^k }{F}^2 + \gamma \NM{ V^{k + 1} - V^k }{F}^2$,
where $\gamma > 0$ is a constant; and

\item[(iii).]
$\lim_{k \rightarrow \infty} (U^{k + 1} - U^k) = 0$
and $\lim_{k \rightarrow \infty} (V^{k + 1} - V^k) = 0$.
\end{itemize}
\end{proposition}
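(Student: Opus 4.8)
The plan is to establish the three claims in the order (ii), (i), (iii), because the sufficient decrease in (ii) already delivers the monotone decrease $\dot{H}(U^{k+1}, V^{k+1}) \le \dot{H}(U^k, V^k)$ on which both (i) and (iii) rest. The driving fact is that the surrogate $\dot{F}^k$ is not just convex but \emph{strongly} convex in $(\bar{U}, \bar{V})$ with a modulus that does not depend on $k$: the terms $\frac{\lambda}{2}\NM{U^k + \bar{U}}{F}^2$ and $\frac{\lambda}{2}\NM{V^k + \bar{V}}{F}^2$ each contribute a fixed curvature $\lambda$ in the corresponding block, while the $\ell_1$ term together with $\frac{1}{2}\NM{\Lambda_r^k \bar{U}}{F}^2$ and $\frac{1}{2}\NM{\Lambda_c^k \bar{V}}{F}^2$ are merely convex. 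Hence $\dot{F}^k$ is $\lambda$-strongly convex for every $k$, with $\lambda > 0$ fixed throughout.

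For (ii), write $(\bar{U}^k, \bar{V}^k)$ for the minimizer of $\dot{F}^k$ computed in step~4, so that $U^{k+1} - U^k = \bar{U}^k$ and $V^{k+1} - V^k = \bar{V}^k$. Applying the standard strong-convexity inequality at this minimizer with the test point $(0,0)$ gives $\dot{F}^k(0, 0) \ge \dot{F}^k(\bar{U}^k, \bar{V}^k) + \frac{\lambda}{2}(\NM{\bar{U}^k}{F}^2 + \NM{\bar{V}^k}{F}^2)$. I then invoke the surrogate properties recorded just before Proposition~\ref{pr:bndext}: the tangency $\dot{F}^k(0,0) = \dot{H}(U^k, V^k)$ and the majorization $\dot{F}^k(\bar{U}^k, \bar{V}^k) \ge \dot{H}(U^{k+1}, V^{k+1})$. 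Chaining the three relations yields $\dot{H}(U^k, V^k) - \dot{H}(U^{k+1}, V^{k+1}) \ge \frac{\lambda}{2}(\NM{U^{k+1} - U^k}{F}^2 + \NM{V^{k+1} - V^k}{F}^2)$, which is exactly (ii) with $\gamma = \lambda/2$.

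For (i), I observe that $\dot{H}$ is the sum of a loss bounded below by $\phi(0)\,\nnz{W}$ — since $\phi$ is increasing on $[0,\infty)$, each $\phi(|\cdot|) \ge \phi(0)$ — and the coercive regularizer $\frac{\lambda}{2}(\NM{U}{F}^2 + \NM{V}{F}^2)$. Combining with the monotonicity from (ii), $\frac{\lambda}{2}(\NM{U^k}{F}^2 + \NM{V^k}{F}^2) \le \dot{H}(U^k, V^k) - \phi(0)\,\nnz{W} \le \dot{H}(U^1, V^1) - \phi(0)\,\nnz{W}$, a finite constant; dividing by $\lambda/2 > 0$ bounds $\{(U^k, V^k)\}$, so it has at least one limit point. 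For (iii), I sum (ii) over $k = 1, \dots, K$: the left side telescopes to $\dot{H}(U^1, V^1) - \dot{H}(U^{K+1}, V^{K+1})$, which is at most $\dot{H}(U^1, V^1) - \inf \dot{H} < \infty$ because $\dot{H}$ is bounded below. Letting $K \to \infty$ shows $\sum_{k=1}^{\infty}(\NM{U^{k+1} - U^k}{F}^2 + \NM{V^{k+1} - V^k}{F}^2) < \infty$, so the summands tend to $0$, which is (iii).

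The one delicate point is securing a \emph{uniform} strong-convexity modulus, since $\dot{W}^k$, $\Lambda_r^k$ and $\Lambda_c^k$ are rebuilt every iteration and could in principle degenerate. The plan avoids this entirely by leaning on the regularizer, whose curvature $\lambda$ is independent of the reweighting, so $\gamma = \lambda/2$ serves for all $k$; this is precisely where $\lambda > 0$ (as used throughout the experiments) enters, and it is the same positivity that powers the coercivity step in (i). I would also be careful to use the exact tangency $\dot{F}^k(0,0) = \dot{H}(U^k, V^k)$ from Proposition~\ref{pr:bndext} rather than a mere inequality, since any slack there would destroy the telescoping that underlies (iii).
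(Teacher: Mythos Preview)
Your proof is correct and takes a genuinely different route from the paper's. The paper proves (i) first, using the basic MM monotonicity $\dot{H}(U^{k+1},V^{k+1})\le\dot{H}(U^k,V^k)$ together with coercivity; then it leverages boundedness to extract a uniform lower bound on $\phi'(|[U^k(V^k)^\top]_{ij}|)$ (via Assumption~\ref{ass:phi}), which in turn gives a uniform lower bound $\alpha>0$ on the diagonal entries of $\Lambda_r^k,\Lambda_c^k$ (via Assumption~\ref{ass:wht}); finally a subgradient/convexity argument on the part of $\dot{F}^k$ without the $\Lambda$-terms yields (ii) with $\gamma=\alpha/2$. Your argument short-circuits this chain by observing that the regularizer alone already makes $\dot{F}^k$ $\lambda$-strongly convex uniformly in $k$, so (ii) follows immediately with $\gamma=\lambda/2$, and (i) then drops out of the monotone decrease plus coercivity.

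What each buys: your approach is shorter, gives an explicit data-independent constant $\gamma=\lambda/2$, and does not invoke Assumption~\ref{ass:wht} for (ii). The paper's approach, by contrast, locates the curvature in the $\Lambda_r^k,\Lambda_c^k$ terms and would still deliver a sufficient decrease for (ii) even if the regularizer were absent from the surrogate (though $\lambda>0$ is still needed for coercivity in (i), so neither proof escapes that hypothesis). Both arguments are valid; yours is the more economical one here.
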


\begin{proof}
First note that,
\begin{align}
\inf_{U, V} H(U, V) \ge 0
,
\lim\limits_{\substack{\NM{U}{F} \rightarrow \infty \\ \NM{V}{F} \rightarrow \infty}} H(U, V)
= \infty,
\label{eq:temp10}
\end{align}
Then,
the sequence $\{ U^k \}$ and $\{ V^k \}$ is bounded,
and we otbain the result in part (i).

Thus,
there exists a positive constant $c$ such that
\begin{align*}
c_1 \ge | [ U^k (V^k)^{\top} ]_{ij} |,
\quad
\forall i, j, k.
\end{align*}
From Assumption~\ref{ass:phi},
$\phi$ is a strictly increasing function,
thus $\phi' > 0$.
Then,
there exists a positive constant $c_2$ such that
\begin{align*}
\phi'\left( | [ U^k (V^k)^{\top} ]_{ij} | \right) \ge c_2 \equiv \phi'(c_1).
\end{align*}
From Assumption~\ref{ass:wht},
each row and column in $W$ has at least one nonzero element.
By the definition of $\Lambda_r^k$ in Proposition~\ref{pr:bndext},
its diagonal elements is given by
\begin{align*}
\left[ \Lambda_r^k \right]_{ii}
\ge \sqrt{\sum_{j = 1}^n W_{ij} c_2}.
\end{align*}
The same holds for $\Lambda_c^k$.
Thus, there exists a constant $\alpha > 0$
such that all diagonal elements in $\Lambda_r^k$ and $\Lambda_c^k$ are not smaller than it.

As $\left( \bar{U}^k, \bar{V}^k \right)$ is the optimal solution of $\min \dot{F}^k$,
then
\begin{align}
(0, 0) \in \partial \dot{F}^k \left( \bar{U}^k, \bar{V}^k \right).
\label{eq:temp20}
\end{align}
Define
\begin{align*}
\dot{J}^k
( \bar{U}, \bar{V} ) 
\equiv \NM{\dot{W}^k \odot ( M \! - \! U^k (V^k)^{\top} \! - \! \bar{U} (V^k)^{\top} 
\! - \! U^k \bar{V}^{\top} )}{1}
\! + \! \frac{\lambda}{2} \NM{U^k \! + \! \bar{U}}{F}^2
\! + \! \frac{\lambda}{2} \NM{V^k \! + \! \bar{V}}{F}^2 
\! + \! b^k.
\end{align*}
Recall the definition of $\dot{F}^k$.
From \eqref{eq:temp20},
we have
\begin{align*}
\left( G_{\bar{U}^k}, G_{\bar{V}^k} \right) \in \partial \bar{J}^k (\bar{U}^k, \bar{V}^k).
\end{align*}
Thus
\begin{align}
(0, 0) = \left( G_{\bar{U}^k}, G_{\bar{V}^k} \right)
+ \left(  (\Lambda_r^k)^2 \bar{U}, (\Lambda_c^k)^2 \bar{V} \right).
\label{eq:temp5}
\end{align}
Multiplying $(\bar{U}^k, \bar{V}^k)$ on both side of \eqref{eq:temp5},
we have
\begin{align}
0 
= \; & 
\Tr{G_{\bar{U}^k}^{\top} \bar{U}^k} 
+ \Tr{G_{\bar{V}^k}^{\top} \bar{V}^k}
+ \NM{(\Lambda_r^k)^2 \bar{U}}{F}^2
+ \NM{(\Lambda_c^k)^2 \bar{V}}{F}^2.
\label{eq:temp6}
\end{align}
As $\dot{J}^k$ is a convex function,
by the definition of the subgradient,
we have
\begin{eqnarray}
\dot{J}^k(0, 0)
\ge \dot{J}^k(\bar{U}^k, \bar{V}^k)
- \Tr{G_{\bar{U}^k}^{\top} \bar{U}^k} 
- \Tr{G_{\bar{V}^k}^{\top} \bar{V}^k}.
\label{eq:temp7}
\end{eqnarray}
Combining \eqref{eq:temp6} and \eqref{eq:temp7},
we obtain
\begin{align}
\dot{J}^k (0, 0)
& \ge \dot{J}^k(\bar{U}^k, \bar{V}^k)
+ \NM{(\Lambda_r^k)^2 \bar{U}}{F}^2
+ \NM{(\Lambda_c^k)^2 \bar{V}}{F}^2
\notag
\\
& \ge \dot{H}^k(\bar{U}^k, \bar{V}^k)
\! + \! \frac{1}{2} \NM{(\Lambda_r^k)^2 \bar{U}}{F}^2
\! + \! \frac{1}{2} \NM{(\Lambda_c^k)^2 \bar{V}}{F}^2.
\label{eq:temp8}
\end{align}
Note that
\begin{align*}
\dot{J}^k(\mathbf{0}, \mathbf{0}) 
& = H(U^k, V^k),
\\
\dot{H}^k(\bar{U}^k, \bar{V}^k)
& = H(U^{k + 1}, V^{k + 1}),
\end{align*}
and using \eqref{eq:temp8},
we have
\begin{align}
\!\!\!
H(U^k, V^k)
- H(U^{k + 1}, V^{k + 1})
\ge \frac{1}{2} \NM{\Lambda_r^k \bar{U}^k}{F}^2
+ \frac{1}{2} \NM{(\Lambda_c^k)^2 \bar{V}^k}{F}^2
\ge \frac{\alpha}{2} \left(  \NM{\bar{U}^k}{F}^2 + \NM{\bar{V}^k}{F}^2 \right) .
\label{eq:temp9}
\end{align}
Thus, we obtain the result in part (ii) in Proposition~\ref{pr:limit}
(with $\gamma = \alpha / 2$).

Summing all inequalities in \eqref{eq:temp9} from $k = 1$ to $K$, 
we have
\begin{align*}
H(U^1, V^1)
- H(U^{K + 1}, V^{K + 1})
\ge  \sum_{k = 1}^K \frac{\alpha}{2} \NM{ \bar{U}^k }{F}^2
+ \frac{\alpha}{2} \NM{ \bar{V}^k }{F}^2.
\end{align*}
From \eqref{eq:temp10},
we have
\begin{align}
\sum_{k = 1}^{\infty} \NM{\bar{U}^k}{F}^2 < \infty,
\sum_{k = 1}^{\infty} \NM{\bar{V}^k}{F}^2 < \infty,
\label{eq:temp17}
\end{align}
which indicates that
\begin{align*}
\lim\limits_{k \rightarrow \infty}
\NM{\bar{U}^k}{F}^2
& = \lim\limits_{k \rightarrow \infty}
\NM{(U^k - U^{k + 1})}{F}^2
= 0,
\\
\lim\limits_{k \rightarrow \infty}
\NM{\bar{V}^k}{F}^2
& = \lim\limits_{k \rightarrow \infty}
\NM{(V^k - V^{k + 1})}{F}^2
= 0.
\end{align*}
Then, we have the result in part (iii).
\end{proof}


\subsection{Proposition~\ref{pr:direct}}
\label{app:pr:direct}

The following connects the subgradient of surrogate $\dot{F}^k$ to the
Clarke subdifferential of $\dot{H}$.

\begin{proposition} 
	\label{pr:direct}
	(i) $\partial \dot{F}^k(0, 0) = \partial^{\circ} \dot{H}^k(0, 0)$;
	(ii) If $0 \in \partial^{\circ} \dot{H}^k(0, 0)$,
	then $(U^k,  V^k)$ is a critical point of \eqref{eq:mrmf}.
\end{proposition}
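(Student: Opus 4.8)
The plan is to reduce both parts to a single entrywise fact: that the reweighted-$\ell_1$ per-entry term of $\dot{F}^k$ and the nonconvex per-entry loss of $\dot{H}^k$ have the same (Clarke) subdifferential at the common residual $C^k \equiv M - U^k(V^k)^\top$. First I would strip off, at the origin, the pieces that do not matter. Write $\dot{H}^k = \Phi_2 + R$, where $\Phi_2(\bar{U},\bar{V}) = \sum_{i,j} W_{ij}\phi(|M_{ij} - [(U^k+\bar{U})(V^k+\bar{V})^\top]_{ij}|)$ and $R(\bar{U},\bar{V}) = \frac{\lambda}{2}\NM{U^k+\bar{U}}{F}^2 + \frac{\lambda}{2}\NM{V^k+\bar{V}}{F}^2$; and write $\dot{F}^k = \Phi_1 + R + \frac{1}{2}\NM{\Lambda_r^k\bar{U}}{F}^2 + \frac{1}{2}\NM{\Lambda_c^k\bar{V}}{F}^2 + b^k$, where $\Phi_1(\bar{U},\bar{V}) = \NM{\dot{W}^k\odot(M - U^k(V^k)^\top - \bar{U}(V^k)^\top - U^k\bar{V}^\top)}{1}$. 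The two $\Lambda$-quadratics are $C^1$ with zero gradient at the origin, $b^k$ is constant, and $R$ is $C^1$ with $\nabla R(0,0) = (\lambda U^k, \lambda V^k)$; so by convex calculus for $\dot{F}^k$ and the Clarke sum rule for $\dot{H}^k$ (both exact, the extra summands being smooth), $\partial\dot{F}^k(0,0) = \partial\Phi_1(0,0) + (\lambda U^k,\lambda V^k)$ and $\partial^\circ\dot{H}^k(0,0) = \partial^\circ\Phi_2(0,0) + (\lambda U^k,\lambda V^k)$. Hence (i) reduces to $\partial\Phi_1(0,0) = \partial^\circ\Phi_2(0,0)$.

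I would then write $\Phi_1 = \psi\circ L$ and $\Phi_2 = \Psi\circ G$, where $L(\bar{U},\bar{V}) = C^k - \bar{U}(V^k)^\top - U^k\bar{V}^\top$ is affine, $G(\bar{U},\bar{V}) = M - (U^k+\bar{U})(V^k+\bar{V})^\top$ is quadratic (hence $C^1$), $\psi(Z) = \sum_{i,j}\dot{W}^k_{ij}|Z_{ij}|$ is convex ($\dot{W}^k\ge 0$ since $\phi' > 0$ by Assumption~\ref{ass:phi}), and $\Psi(Z) = \sum_{i,j}W_{ij}\phi(|Z_{ij}|)$. Note $L(0,0) = G(0,0) = C^k$, and since the cross term $\bar{U}\bar{V}^\top$ vanishes to second order, $DG(0,0) = DL$, with adjoint $DL^*\xi = (-\xi V^k, -\xi^\top U^k)$. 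I would check that $\Psi$ is Clarke-regular at $C^k$: $\phi(|\cdot|)$ is $C^1$ away from $0$, and at $0$ its one-sided directional derivative $\phi'(0)|d|$ coincides with its Clarke directional derivative (because $\phi$ is $C^1$, increasing and concave, so no slope of $\phi$ exceeds $\phi'(0)$), so $\phi(|\cdot|)$ is regular and so is the nonnegative combination $\Psi$. The convex chain rule then gives $\partial\Phi_1(0,0) = \{DL^*\xi : \xi\in\partial\psi(C^k)\}$, and the Clarke chain rule \cite{clarke1990optimization} --- with \emph{equality}, as $G$ is $C^1$ and $\Psi$ is regular --- gives $\partial^\circ\Phi_2(0,0) = \{DL^*\xi : \xi\in\partial^\circ\Psi(C^k)\}$.

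It remains to show $\partial\psi(C^k) = \partial^\circ\Psi(C^k)$, which I would verify entrywise using separability and the relation $\dot{W}^k_{ij} = W_{ij}\phi'(|C^k_{ij}|)$ from Corollary~\ref{cor:loss}: where $C^k_{ij}\neq 0$ both sets have $(i,j)$-component $\{\dot{W}^k_{ij}\sign{C^k_{ij}}\}$ (using $\partial^\circ\phi(|\alpha|) = \sign{\alpha}\phi'(|\alpha|)$, recorded in the proof of Lemma~\ref{lem:ctri}); where $C^k_{ij} = 0$ both have $(i,j)$-component $[-\dot{W}^k_{ij},\dot{W}^k_{ij}] = W_{ij}[-\phi'(0),\phi'(0)]$; and where $W_{ij} = 0$ both are $\{0\}$. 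This establishes (i). For (ii): $\dot{H}$ of \eqref{eq:mrmf} equals, up to the same $C^1$ $\lambda$-quadratic, $\Psi$ composed with the bilinear map $(U,V)\mapsto M - UV^\top$, whose value at $(U^k,V^k)$ is again $C^k$ and whose linearization there is again $DL$; hence $\partial^\circ\dot{H}(U^k,V^k) = \partial^\circ\dot{H}^k(0,0)$. So $0\in\partial^\circ\dot{H}^k(0,0)$ forces $0\in\partial^\circ\dot{H}(U^k,V^k)$, which by Definition~\ref{def:crti} --- equivalently, Lemma~\ref{lem:ctri} with $C = C^k$ and $W\odot S = \xi$ for a witnessing $\xi\in\partial^\circ\Psi(C^k)$ --- says $(U^k,V^k)$ is a critical point of \eqref{eq:mrmf}.

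The main obstacle is the chain-rule step: making the Clarke chain rule hold \emph{with equality} rather than merely as an inclusion requires the regularity of $\phi(|\cdot|)$ at the origin, and the entrywise identification $\partial\psi(C^k) = \partial^\circ\Psi(C^k)$ relies on the precise relation $\dot{W}^k_{ij} = W_{ij}\phi'(|C^k_{ij}|)$ holding both on the support of $C^k$ and at its zeros; one must also check that the second-order term $\bar{U}\bar{V}^\top$ in $G$ genuinely contributes nothing to $DG(0,0)$, so that $\Phi_1$ and $\Phi_2$ really are built from the same linear map $DL$.
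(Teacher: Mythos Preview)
Your proposal is correct and follows essentially the same approach as the paper: both arguments strip off the smooth quadratic pieces at the origin and reduce Part~(i) to the entrywise identification $\dot{W}^k_{ij}\,\partial|\cdot|(C^k_{ij}) = W_{ij}\,\partial^{\circ}\phi(|\cdot|)(C^k_{ij})$, which holds precisely because $\dot{W}^k_{ij} = W_{ij}\phi'(|C^k_{ij}|)$; Part~(ii) is the same translation argument in both. Your version is more careful than the paper's in that you explicitly invoke Clarke regularity of $\phi(|\cdot|)$ to justify the chain rule as an \emph{equality} rather than an inclusion, whereas the paper simply writes down formulas \eqref{eq:temp13}--\eqref{eq:temp16} without that justification.
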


\begin{proof}
\textbf{Part (i).}
We prove this by 
the Clark subdifferential of $\dot{H}^k$ and subgradient of $\dot{F}^k$.
\begin{itemize}
	\item \underline{Clark subdifferential of $\dot{H}^k$}:
	Let $C^H = M - U V^{\top}$.
	By the definition of Clark differential, we have  
	\begin{align}
	\partial_{U}^{\circ}
	\dot{H}^k(\bar{U},  \bar{V})
	& =  (W  \odot  S^H) (V^k  +  \bar{V}) 
	+  \lambda (U^k  +  \bar{U}),
	\label{eq:temp13}
	\\ 
	\partial_{V}^{\circ} 
	\dot{H}^k(\bar{U},  \bar{V})
	& =  (W  \odot  S^H)^{\top} (U^k  +  \bar{U}) 
	+  \lambda (V^k  +  \bar{V}),
	\label{eq:temp15}
	\end{align}
	where $S^H_{ij} = 
	\sign{C^H_{ij}} \cdot \phi'\left( \left| C^H_{ij} \right|  \right)$
	if
	$C^H_{ij} \neq 0$,
	and $S^H_{ij} \in \left[ - \phi'(0), \phi'(0) \right]$ otherwise.
	
	\item \underline{Subgradient of $\dot{F}^k$}:
	Let $C^F = M - U^k (V^k)^{\top} - \bar{U}^k (V^k)^{\top} -  U^k (\bar{V}^k)^{\top}$.
	For $\dot{F}^k$, we have
	\begin{align}
	\partial_{U} \dot{F}^k(\bar{U}, \bar{V})
	& = (\dot{W}^k \odot S^F)(V^k + \bar{V}^k)
	+ \lambda (U^k + \bar{U})
	+ (\Lambda_r^k)^2 \bar{U},
	\label{eq:temp14}
	\\
	\partial_{V} \dot{F}^k(\bar{U}, \bar{V})
	& = (\dot{W}^k \odot S^F)^{\top}(U^k + \bar{U}^k)
	+  \lambda (V^k + \bar{V})
	 + (\Lambda_c^k)^2 \bar{V},
	\label{eq:temp16}
	\end{align}
	where $S^F_{ij} = \sign{C^F_{ij}}$ if $C^F_{ij} \neq 0$,
	and $S^F_{ij} \in [-1, 1]$ otherwise.
\end{itemize}
Note that when $\bar{U} = 0$ and $\bar{V} = 0$,
we have $C^H = C^F$.
By the definition of $\dot{W}^k = A^k \odot W$,
we also have $W \odot S^H = \dot{W}^k \odot S^F$.
Finally,
the last term in \eqref{eq:temp14} vanishes to zero as $\bar{U} = 0$.
Thus,
\eqref{eq:temp13} is exactly the same as \eqref{eq:temp14}.
Similarly \eqref{eq:temp15} 
is also the same as
\eqref{eq:temp16}.
As a result,
we have $\partial^{\circ} \dot{F}^k(\mathbf{0}, \mathbf{0}) = \partial^{\circ} \dot{H}^k(0, 0)$.

\noindent
\textbf{Part (ii).}
From the definition of $\dot{H}$ in \eqref{eq:mrmf} and $\dot{H}^k$ in Proposition~\ref{pr:bndext},
we have 
\begin{align*}
\dot{H}^k(\bar{U}, \bar{V})
= \dot{H}(U^k + \bar{U}, V^k + \bar{V}).
\end{align*}
Thus,
if 
$(\mathbf{0}, \mathbf{0}) \in \partial^{\circ} \dot{H}^k(\mathbf{0}, \mathbf{0})$,
we have 
\begin{align*}
(0, 0) \in \partial^{\circ} \dot{H}(U^k, V^k),
\end{align*}
which shows that $(U^k, V^k)$ is a critical point.
\end{proof}


\subsection{Theorem~\ref{thm:critical}}
\label{app:thm:critical}

\begin{proof}
From Proposition~\ref{pr:limit}, we know thata there is at least one limit point for the sequence $\left\lbrace \left( U^k, V^k \right)  \right\rbrace$.
Let $\left\lbrace \left( U^{k_j}, V^{k_j} \right)  \right\rbrace$ be one of its
subsequences,
	and 
	\begin{align*}
	U^* = \lim\limits_{k_j \rightarrow \infty} U^{k_j},
	\quad
	V^* = \lim\limits_{k_j \rightarrow \infty} V^{k_j},
	\end{align*}
	where $(U^*, V^*)$ is a limit point.
	Using Proposition~\ref{pr:direct},
	we have
	\begin{align*}
	\lim\limits_{k_j \rightarrow \infty}
	\partial^{\circ} \dot{F}^{k_j}
	\left( \bar{U}_{k_j}, \bar{V}_{k_j} \right)
	= \lim\limits_{k_j \rightarrow \infty}
	\partial^{\circ} \dot{F}^{k_j}
	\left( 0, 0 \right)
	=  \lim\limits_{k_j \rightarrow \infty}
	\partial^{\circ} \dot{H}^{k_j}
	\left( 0, 0 \right)
	= \partial^{\circ} \dot{H}
	\left( U^*, V^* \right).
	\end{align*}
	Thus,
	$(0, 0)
	\in \partial^{\circ} \dot{H}
	\left( U^*, V^* \right)$,
	which shows that $(U^*, V^*)$ is a critical point (Lemma~\ref{lem:ctri}).
\end{proof}

\section{Additional Materials for the Experiments}


\subsection{Statistics of \textit{MovieLens}.}
\label{app:movielens}

The statistics of \textit{MovieLens} data sets are in following Table~\ref{tab:recSys}.

\begin{table}[ht]
\centering
\caption{\textit{MovieLens} data sets used.}
\small
	\begin{tabular}{c| c | c | c | c}
		\hline
		                        & number of users & number of movies & number of ratings & \% nonzero elements \\ \hline
		\textit{MovieLens-100K} & 943             & 1,682            & 100,000           & 6.30                \\ \hline
		 \textit{MovieLens-1M}  & 6,040           & 3,449            & 999,714           & 4.80                \\ \hline
		\textit{MovieLens-10M}  & 69,878          & 10,677           & 10,000,054        & 1.34                \\ \hline
	\end{tabular}
\label{tab:recSys}
\end{table}

%

\subsection{Experiments on Larger Recommendation Datasets}
\label{app:largerec}

We also perform experiments on two much larger recommendation datasets: \textit{netflix}
(480,189 users, 17,770 items and 100,480,507 ratings) and 
\textit{yahoo} (249,012 users, 296,111 items and 62,551,438 ratings).
The same setup in Section~4.2 is used.
RMC runs out of memory and RMF-MM is too slow.
Thus, they are not compared.
Results are shown on the right 
(CPU time is in minutes). Observations here are the same as those for \textit{MovieLens} data sets.
RMFNL with different nonconvex losses have similar performance
and achieve the lowest RMSE. 
Algorithms for the $\ell_2$-loss have much higher RMSEs than that of $\ell_1$ and RMFNL. 

\begin{table}[H]
	\centering
	\caption{Results on \textit{Netflix} and \textit{Yahoo} datasets.}
	\small
	\begin{tabular}{c |c | c | c | c | c }
		\hline
		         &            & \multicolumn{2}{c|}{\textit{netflix}} & \multicolumn{2}{c}{\textit{yahoo}} \\ \cline{3-6}
		  loss   & algorithm  & RMSE           & time (min)           & RMSE           & time (min)        \\ \hline
		$\ell_2$ & RP         & 0.910          & 142.3                & 0.842          & 105.4             \\ \cline{2-6}
		         & ScaledASD  & 0.918          & 213.9                & 0.864          & 74.2              \\ \cline{2-6}
		         & ALT-Impute & 0.931          & 309.1                & 0.802          & 77.4              \\ \cline{2-6}
		         & OMDL       & 0.923          & \textbf{16.5}        & 0.831          & \textbf{12.3}     \\ \hline
		$\ell_1$ & GRASTA     & 0.857          & 247.3                & 0.751          & 238.5             \\ \hline
		  LSP    & RMFNL      & \textbf{0.805} & 221.0                & \textbf{0.668} & 81.2              \\ \hline
		 Geman   & RMFNL      & \textbf{0.806} & 228.4                & \textbf{0.670} & 98.7              \\ \hline
		Laplace  & RMFNL      & \textbf{0.805} & 216.8                & \textbf{0.669} & 89.2              \\ \hline
	\end{tabular}
\end{table}
\end{document}